\newcommand{\ep}{\varepsilon}
\newcommand{\A}{\mathcal{A}}
\newcommand{\Y}{\mathcal{Y}}
\newcommand{\Z}{\mathcal{Z}}
\renewcommand{\P}{\mathcal{P}}
\renewcommand{\bar}[1]{\overline{#1}}
\newcommand{\floor}[1]{\left\lfloor#1\right\rfloor}
\newcommand{\ceil}[1]{\left\lceil#1\right\rceil}
\DeclareMathOperator{\cost}{Cost}
\newtheorem{theorem}{Theorem}
\newtheorem{lemma}[theorem]{Lemma}
\newtheorem{definition}[theorem]{Definition}
\newtheorem{conjecture}[theorem]{Conjecture}
\newtheorem{proposition}[theorem]{Proposition}
\newtheorem{claim}[theorem]{Claim}
\title{A sharp threshold for a random version of Sperner's Theorem}
\author{J\'ozsef Balogh\footnote{Department of Mathematics, University of Illinois at Urbana-Champaign, Urbana, Illinois 61801, USA. E-mail: \texttt{jobal@illinois.edu}. Research supported by NSF RTG Grant DMS-1937241, NSF Grant DMS-1764123, Arnold O. Beckman Research Award (UIUC Campus Research Board RB 18132), the Langan Scholar Fund (UIUC), and the Simons Fellowship.} \and Robert A.\ Krueger\footnote{Department of Mathematics, University of Illinois at Urbana-Champaign, Urbana, Illinois 61801, USA. Email: \texttt{rak5@illinois.edu}. Research supported by the National Science Foundation Graduate Research Fellowship Program under Grant No. DGE 21-46756.}}
\date{September 20, 2023}
\begin{document}

\maketitle

\begin{abstract}
The Boolean lattice $\mathcal{P}(n)$ consists of all subsets of $[n] = \{1,\dots, n\}$ partially ordered under the containment relation. Sperner's Theorem states that the largest antichain of the Boolean lattice is given by a middle layer: the collection of all sets of size $\floor{n/2}$, or also, if $n$ is odd, the collection of all sets of size $\ceil{n/2}$. Given $p$, choose each subset of $[n]$ with probability $p$ independently. We show that for every constant $p>3/4$, the largest antichain among these subsets is also given by a middle layer, with probability tending to $1$ as $n$ tends to infinity. This $3/4$ is best possible, and we also characterize the largest antichains for every constant $p>1/2$. Our proof is based on some new variations of Sapozhenko's graph container method.
\medskip

\noindent\textbf{Keywords:} Sperner's Theorem, graph container method, random poset.

\noindent\textbf{2020 Mathematics Subject Classification:} 05D40, 05D05, 60C05.
\end{abstract}

\section{Introduction}\label{sec::intro}

An \emph{antichain} in a poset $P$ is a collection of pairwise incomparable elements of $P$, and the \emph{width} of $P$, denoted by $w(P)$, is the size of the largest antichain in $P$. The \emph{Boolean lattice} $\P(n)$ is the poset consisting of all subsets of $[n] := \{1,\dots, n\}$ partially ordered by containment. The Boolean lattice is naturally partitioned into antichains which we call the \emph{layers} of the Boolean lattice, denoted by $\binom{[n]}{k}$ for $0 \leq k \leq n$, which are all subsets of $[n]$ of size $k$. In 1928, Sperner~\cite{Sper} proved that the width of the Boolean lattice is attained by a layer of the Boolean lattice --- in particular, the \emph{middle layer(s)} $\binom{[n]}{\floor{n/2}}$ and $\binom{[n]}{\ceil{n/2}}$. 

A trend in modern combinatorics is the study of random versions of classic results, e.g.~\cite{BMS,CG,ST,Scha} (see~\cite{RandomExtremal} for a survey). For a set $A$ and probability $0 \leq p \leq 1$, we let $A_p$ denote a random subset of $A$, where each element of $A$ is included independently with probability $p$. Typically, $A$, and possibly $p$, depends on some counting parameter $n$. We say that $A_p$ satisfies a property $P$ \emph{with high probability (w.h.p.)} if the probability $A_p$ satisfies $P$ tends to $1$ as $n$ tends to infinity. A \emph{(coarse) threshold} $p_0 = p_0(n)$ for $P$ is a function such that for $p = \omega(p_0)$, $A_p$ satisfies $P$ w.h.p., and for $p = o(p_0)$, $A_p$ does not satisfy $P$ w.h.p. A threshold is \emph{sharp} if for every constant $\ep>0$, we have that $p>(1+\ep)p_0$ implies $A_p$ satisfies $P$ w.h.p., and $p<(1-\ep)p_0$ implies $A_p$ does not satisfy $P$ w.h.p.

We are interested in $w(\P(n)_p)$, the size of the largest antichain in a random induced subposet of the Boolean lattice, and the structure of the maximum size antichains. Answering a question of Erd\H{o}s, R\'enyi~\cite{Renyi} determined the threshold for $X = \P(n)_p$ to satisfy $w(X) = |X|$, that is, the threshold for $\P(n)_p$ to be an antichain. Kohayakawa and Kreuter~\cite{KK} determined $\lim_{n\to\infty} w(\P(n)_p)/|\P(n)_p|$ for all $p=p(n)$ w.h.p. When this limit is strictly between $0$ and $1$, they also showed that there is an antichain of size $(1-o(1))w(\P(n)_p)$ contained in some (depending on $p$) consecutive layers centered on the middle layer(s) w.h.p. This characterization is generalized by Osthus~\cite{Osthus} to when $w(\P(n)_p)/|\P(n)_p| \to 0$: for $p = \omega((r/n)^r \log n)$, where $r=r(n)$ is an integer-valued function, we have w.h.p.\ that
\[ w(\P(n)_p) \leq (1 + O(1/r) + o(1)) p \sum_{-r/2\leq j<r/2} \binom{n}{\floor{n/2}+j} .\]
A lower bound given by the maximum antichain in some consecutive layers shows that this result determines $w(\P(n)_p)$ asymptotically w.h.p.\ when $r \to \infty$, that is, when $p$ decays faster than polynomially. Balogh, Mycroft, and Treglown~\cite{BMT} asymptotically determined $w(\P(n)_p)$ w.h.p.\ when $p$ decays polynomially in $n$ using a two-phase graph container method. Namely they showed that $p_0 = 1/n^r$, for constant integer $r$, is a coarse threshold for
\[ w(\P(n)_p) = (1+o(1)) p \sum_{-r/2 \leq j < r/2} \binom{n}{\floor{n/2}+j} = (1+o(1)) p r \binom{n}{\floor{n/2}} .\]
The case $r=1$ solves a problem posed by Kohayakawa and Kreuter (in a draft of~\cite{KK}), as for $p = \omega(1/n)$, the width of $\P(n)_p$ is asymptotically equal to the size of a middle layer (this special case was proven independently by Collares Neto and Morris~\cite{CNM}). Kohayakawa and Kreuter also wondered about the threshold for the width to be exactly equal to the size of a middle layer. Hamm and Kahn~\cite{HK2} proved, as a consequence of a proof of a similar random version of the Erd\H{o}s--Ko--Rado Theorem, that there exists a constant $p<1$ such that for $X = \P(n)_p$,
\[ w(X) = \max\left\{ X \cap \binom{[n]}{\floor{n/2}}, X \cap \binom{[n]}{\ceil{n/2}} \right\} \]
w.h.p., calling this a ``more literal [random] counterpart of Sperner's Theorem.'' Our main result is that this statement holds w.h.p.\ for every constant $p>3/4$, which is best possible.
\begin{theorem}\label{thm::main}
For every constant $p>3/4$, the largest antichain in $\P(n)_p$ is (one of) the middle layer(s), with high probability.
\end{theorem}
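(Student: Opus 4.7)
Fix $m := \floor{n/2}$, $M := \binom{[n]}{m}$, and $M_p := M \cap \P(n)_p$. The goal is to show w.h.p.\ that every antichain $\A \subseteq \P(n)_p$ satisfies
\[
|\A|\ \le\ \max\bigl\{|M_p|,\ |\P(n)_p \cap \binom{[n]}{\ceil{n/2}}|\bigr\},
\]
the reverse inequality being automatic since each middle layer is itself an antichain in $\P(n)_p$. The case $n$ odd with $|\A|$ weighted toward $\binom{[n]}{\ceil{n/2}}$ is symmetric (swap complements), so I focus on comparing against $|M_p|$.

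First I would localize $\A$ to layers near the middle. Since $p$ is constant, $|M_p|=(1+o(1))\,p\binom{n}{m}$ w.v.h.p.\ by Chernoff, whereas layers $\binom{[n]}{k}$ with $|k-n/2|>K$ satisfy $\binom{n}{k}\le e^{-c K^{2}}\binom{n}{m}$. Combined with the LYM inequality $\sum_k |\A_k|/\binom{n}{k}\le 1$ and Chernoff bounds on each $|\P(n)_p\cap\binom{[n]}{k}|$, this shows that, for a suitable constant $K=K(p)$, w.v.h.p.\ any potentially extremal $\A$ is supported in $\bigcup_{|k-m|\le K}\binom{[n]}{k}$.

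Next, for such an $\A$ write $\A=\bigcup_k U_k$ with $U_k\subseteq\binom{[n]}{k}$, set $U:=\bigcup_{k\neq m}U_k$, and let $R\subseteq M$ be the set of middle-layer elements comparable with some element of $U$. The antichain condition forces $U_m\cap R=\emptyset$, so $U_m\subseteq M_p\setminus R$ and
\[
|\A|-|M_p|\ \le\ |U|\ -\ |M_p\cap R|.
\]
Hence $|\A|>|M_p|$ requires at least $|R|-|U|+1$ elements of $R$ to be \emph{absent} from $\P(n)_p$ while all of $U$ is \emph{present}. The cheapest case is $|U|=1$ with $U=\{S\}$, $|S|=m-1$: all $\ceil{n/2}+1$ supersets of $S$ in $M$ must be absent, and summing over $S$ the expected count is
\[
\binom{n}{m-1}\,p\,(1-p)^{\ceil{n/2}+1}\ =\ \Theta\bigl(n^{-1/2}\bigr)\cdot\bigl(2\sqrt{1-p}\,\bigr)^{n},
\]
which is $o(1)$ exactly when $p>3/4$; this is where the threshold emerges (the case $|S|=m+1$ is symmetric).

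For $|U|\ge 2$, Kruskal--Katona-type expansion helps: if the supersets of the elements of $U$ are spread out then $|R|\gtrsim(n/2)|U|$, making the required $|R|-|U|+1$ absences far more unlikely than in the singleton case, and the singleton-style expected-count bound improves correspondingly. The substantive work, and the main obstacle, is a \emph{two-sided container argument} handling the remaining $U$'s --- those concentrated in a small subcube (where Kruskal--Katona is weak) and that straddle both $\binom{[n]}{m\pm 1}$ (so that the upper shadow of $U_{m-1}$ and the lower shadow of $U_{m+1}$ overlap inside $R$, double-counting absences). Following Sapozhenko's paradigm, one groups candidate $U$'s by small containers $\F\supseteq U$ together with an approximating boundary $\hat R\subseteq R$, bounds the number of such pairs of each size, and controls the probability per pair using the expansion of $\hat R$. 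The whole sum over containers must remain $o(1)$ at the \emph{sharp} threshold $p=3/4$, not merely at some unspecified constant below $1$ as in the Hamm--Kahn approach, and making the container counts tight enough to achieve this sharp constant is where essentially all of the effort lies.
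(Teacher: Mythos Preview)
Your proposal correctly identifies the singleton case as the source of the threshold $3/4$, but it is a sketch rather than a proof: the container argument for $|U|\ge 2$ is not carried out, and you yourself say this is ``where essentially all of the effort lies.'' More importantly, you misidentify where the sharp constant enters. The paper's key structural point is that the container machinery (Lemmas~\ref{lem::weak} and~\ref{lem::strong}, assembled into Theorem~\ref{thm::2-layers}) already works for every constant $p>1/2$ once $|A|\ge 2$; the stronger hypothesis $p>3/4$ is needed \emph{only} for the case $|A|=1$, which is the one-line first-moment bound at the end of Section~\ref{sec::large-delta}. So the containers do not need to be ``tight enough to achieve this sharp constant'' --- they only need to beat $1/2$, and the sharp $3/4$ is entirely absorbed by the trivial singleton calculation you already did.

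The paper also organizes the reduction differently, in a way that sidesteps the complications you flag. Rather than comparing an arbitrary antichain directly against $|M_p|$ with $U$ spread over $2K+1$ layers and a two-sided $R$ (with the attendant overlap between upper and lower shadows), the paper runs a layer-by-layer compression: take the highest nonempty layer $B_k$ of a maximum antichain, pick a $2$-linked component $B_k'$, and apply Lemma~\ref{lem::2-layers} (or Theorem~\ref{thm::2-layers}) to its closure $[B_k']$ to show that $(B\setminus B_k')\cup(\partial_* B_k'\cap X)$ is strictly larger. Iterating pushes $B$ into the middle layer(s), and for odd $n$ a final application of Theorem~\ref{thm::2-layers} (using Kruskal--Katona to ensure one side has closure at most half the layer) finishes. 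This reduces everything to a single bipartite statement about adjacent layers and $2$-linked closed sets, making the LYM localization step unnecessary and avoiding the multi-layer, two-sided container you were anticipating.
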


Call a set $v \in \binom{[n]}{\ceil{n/2}+1}$ \emph{isolated} if $v$ appears in $X=\P(n)_p$ but no subset of $v$ of size $\ceil{n/2}$ appears in $X$. To see that $p_0 = 3/4$ in Theorem~\ref{thm::main} is best possible, note that the probability that $v$ is isolated is
\[ p_0 (1-p_0)^{\ceil{n/2}+1} = 3 \cdot 4^{-\ceil{n/2}-2} .\]
Since there are $\binom{n}{\ceil{n/2}+1} \approx 2^n$ choices for $v$, an easy second moment calculation shows that w.h.p.\ isolated $v$ exist for every constant $p<3/4$, and hence $X \cap \binom{[n]}{\ceil{n/2}}$ is not a maximal antichain in $X = \P(n)_p$ w.h.p.

Our proof of Theorem~\ref{thm::main} combines Sapozhenko's graph containers~\cite{Sap} with Hamm and Kahn's proof~\cite{HK2}, which itself is inspired by Sapozhenko's containers. An early combinatorial problem about posets was posed by Dedekind~\cite{Dede} in 1897, when he asked for the number of antichains in $\P(n)$. Korshunov~\cite{Korsh} solved this problem asymptotically, and Sapozhenko developed, generalized, and simplified Korshunov's arguments into a powerful asymptotic enumeration technique. Sapozhenko's containers also provided a simplified method of asymptotically counting the number of independent sets in the hypercube, originally due to Korshunov and Sapozhenko~\cite{KS}; see~\cite{Galvin} for an excellent exposition in English. In recent years, the containers have proven quite versatile, being employed in various statistical physics models. See Galvin and Kahn~\cite{GK} for an early example of this, see Jenssen and Keevash~\cite{JK} for a recent and encompassing paper, and see Jenssen and Perkins~\cite{JP} for a gentler introduction to the statistical physics models and tools. We also mention that Balogh, Garcia, and Li~\cite{BGL} applied Sapozhenko's containers to the same graph that we work with in our main technical theorem, Theorem~\ref{thm::2-layers}, although our analysis is very different.

We actually prove a more precise result, akin to a `hitting time' statement. Expanding the above definition, for $v \in \binom{[n]}{k}$ with $k \geq \ceil{n/2}$ (resp.\ $k \leq \floor{n/2}$), we say that $v$ is \emph{nearly isolated} if $v$ is in $X=\P(n)_p$, but at most one subset (resp.\ superset) of $v$ of size $k-1$ (resp.\ $k+1$) is in $X$.
\begin{theorem}\label{thm::hit}
For every constant $p>1/2$, there exists $k \in \{\floor{n/2},\ceil{n/2}\}$ such that every largest antichain in $X = \P(n)_p$ consists of sets in $X \cap \binom{[n]}{k}$ together with nearly isolated sets in $\binom{[n]}{k-1} \cup \binom{[n]}{k+1}$, with high probability.
\end{theorem}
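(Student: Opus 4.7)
My plan has three steps: localise any maximum antichain to three consecutive central layers using known asymptotic results; recast the problem as a König/vertex-cover question on the bipartite containment graph of those layers restricted to $X$; and apply a Sapozhenko-style two-level container argument (the main technical content, Theorem~\ref{thm::2-layers}) to force the claimed near-isolation structure.

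For localisation, the asymptotic width results of Kohayakawa--Kreuter~\cite{KK} and Balogh--Mycroft--Treglown~\cite{BMT} give $w(X) = (1+o(1))|X \cap \binom{[n]}{\floor{n/2}}|$ w.h.p.; combining this with Chernoff concentration of $|X \cap \binom{[n]}{j}|$ for $|j-n/2| \geq 2$ and an LYM-type stability sharpening forces every maximum antichain $A$ entirely into $\binom{[n]}{k-1} \cup \binom{[n]}{k} \cup \binom{[n]}{k+1}$ for some $k \in \{\floor{n/2}, \ceil{n/2}\}$, since any contribution from a further layer would require the deletion of more than its share of central-layer partners and drop $|A|$ below $w(X)$.

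For the bipartite reformulation, decompose $A = D \cup L \cup U$ with $D \subseteq \binom{[n]}{k-1}$, $L \subseteq \binom{[n]}{k}$, $U \subseteq \binom{[n]}{k+1}$; since $D$ and $U$ are automatically incomparable, $A$ is an antichain iff $L \cup U$ is independent in the bipartite containment graph $G_{k,k+1}[X]$ and $D \cup L$ is independent in $G_{k-1,k}[X]$. By König's theorem, maximising $|A|$ reduces to finding a minimum vertex cover of $G_{k-1,k}[X] \cup G_{k,k+1}[X]$, and Theorem~\ref{thm::hit} asserts that every such cover consists exactly of the non-isolated vertices of $X \cap (\binom{[n]}{k-1} \cup \binom{[n]}{k+1})$ together with one central-layer partner for each nearly-but-not-isolated outer vertex.

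The hard step is bounding the ``bad'' independent sets of $G_{k,k+1}[X]$ containing some $v \in X \cap \binom{[n]}{k+1}$ with $\geq 2$ neighbours in $X \cap \binom{[n]}{k}$ (symmetrically for $\binom{[n]}{k-1}$). Following Hamm--Kahn~\cite{HK2}, I would build two nested containers: a crude container indexed by a $2$-linked approximation $T$ to the shadow $N(U) \cap X$, enumerated via isoperimetric bounds on the middle-layer bipartite graph, and a fine container refining $T$ to determine $U$ modulo its boundary. Weighting candidates by their probability $p^{|U|+|L|}(1-p)^{|\text{forbidden middle sets}|}$ and union-bounding over containers should give expected bad count $o(1)$ precisely when $p > 1/2$, since each non-nearly-isolated $v$ forces at least two excluded middle-layer partners and the product of container count, expansion factor, and $p/(1-p)$ drops below $1$ only in this regime. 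The main obstacle is that at constant $p$ just above $1/2$ the probabilistic margin is thin, so the container enumeration must be exponentially tight with a sharp constant; adapting Sapozhenko's $2$-linked closure and isoperimetric machinery to the middle-layer bipartite graph --- which has $\Theta(2^n/\sqrt n)$ vertices per side and is $(\floor{n/2}+1)$-regular --- to extract exactly the right constant is where the bulk of the work lives, and is presumably the content of Theorem~\ref{thm::2-layers}.
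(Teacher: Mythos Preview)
The principal gap is your localization step. The asymptotic width result $w(X)=(1+o(1))\,p\binom{n}{\lfloor n/2\rfloor}$ from~\cite{BMT,CNM} cannot by itself confine a maximum antichain to three fixed layers: layers $\binom{[n]}{j}$ with $|j-n/2|=O(1)$ all have size $(1-O(1/n))\binom{n}{\lfloor n/2\rfloor}$, so no argument based on a $(1+o(1))$ estimate together with Chernoff bounds can distinguish among them. Your ``LYM-type stability sharpening'' is carrying real weight that is not supplied; indeed, the three-layer localization is precisely Conjecture~\ref{conj::main} in the paper, stated as open for general $p=\omega(1/n)$.

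The paper's route inverts your dependency: Theorem~\ref{thm::2-layers} is not invoked only on the central layers after localization, but is first extended (Lemma~\ref{lem::2-layers}) to \emph{every} pair of adjacent layers away from the middle, and this extension is what performs the localization. Concretely, one takes the highest nonempty layer $B_k$ of a maximum antichain $B$; for any $2$-linked component of $B_k$ of size at least $2$, replacing it by its shadow in $X$ strictly enlarges $B$ by Lemma~\ref{lem::2-layers}, so every such component is a nearly isolated singleton. A second, two-layer push-down (using that the number of nearly isolated sets is at most $2^{n/2}$) then rules out $k>\lceil n/2\rceil+1$. Thus both the localization and the structural conclusion come from the same container theorem applied layer by layer, not from any prior asymptotic width estimate. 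Your K\"onig reformulation in step~2 is fine once localization is in hand, but it is not needed: the direct ``replace a component by its shadow'' swap already gives the contradiction.
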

Theorem~\ref{thm::hit} implies Theorem~\ref{thm::main}, as for constant $p>3/4$, there are no nearly isolated sets, by an easy first moment calculation. The $p_0 = 1/2$ in Theorem~\ref{thm::hit} is also best possible, as for every constant $p<1/2$, we expect to see the following structure for $k = \ceil{n/2}$: sets $u,v \in \binom{[n]}{k+1} \cap X$ that share a subset $u'$ in $\binom{[n]}{k} \cap X$, and at most one set $v' \neq u'$ is a subset of either $u$ or $v$ appearing in $X \cap \binom{[n]}{k}$, where $X = \P(n)_p$. That is, one of $u$ or $v$ is not nearly isolated, but $u$ and $v$ together act like two nearly isolated sets, in the sense that $u$ and $v$ together with $\big(\binom{[n]}{k} \cap X\big) \setminus \{u',v'\}$ form an antichain. See Figure~\ref{fig::defects}. To see that this structure appears below $p_0 = 1/2$, note that there are roughly $(n/2)^3 \binom{n}{n/2} \approx 2^n$ choices for $u$, $v$, $u'$, and $v'$, and the probability that they form the desired structure in our random subfamily is roughly $p_0^4 (1-p_0)^{2(n/2)-3} = 2^{n+1}$.

\tikzset{vtx/.style={inner sep=1pt, outer sep=0pt, circle, fill=black, draw=black}}

\begin{figure}
\centering
\begin{tikzpicture}
\def\x{1.2}
\draw[rounded corners] (-2*\x,-.3) rectangle (10.5*\x,1);
\draw (-2*\x-.5,.3) node{$\binom{[n]}{k+1}$};
\draw[rounded corners] (-2*\x,-.6) rectangle (10.5*\x,-2);
\draw (-2*\x-.5,-1.3) node{$\binom{[n]}{k}$};
\draw (2*\x, 1.3) -- (2*\x, -2.3);
\begin{scope}[shift={(0,0)}]
	\draw (0,0) node[vtx]{} node[above]{$v \in X$};

	\draw (0,0) -- (-3*\x/2,-1) node[vtx]{} node[below]{$\not\in X$};
	\draw (0,0) -- (-1*\x/2,-1) node[vtx]{} node[below]{$\not\in X$};
	\draw (0,0) -- ( 1*\x/2,-1) node[vtx]{} node[below]{$\not\in X$};
	\draw (0,0) -- ( 3*\x/2,-1) node[vtx]{} node[below]{$\not\in X$};
\end{scope}
\begin{scope}[shift={(4*\x,0)}]
	\draw (0,0) node[vtx]{} node[above]{$v \in X$};
	\draw (4*\x,0) node[vtx]{} node[above]{$u \in X$};

	\draw (0,0) -- (-3*\x/2,-1) node[vtx]{} node[below]{$\not\in X$};
	\draw (0,0) -- (-1*\x/2,-1) node[vtx]{} node[below]{$\not\in X$};
	\draw (0,0) -- ( 1*\x/2,-1) node[vtx]{} node[below]{$\not\in X$};
	\draw (0,0) -- ( 4*\x/2,-1) node[vtx]{} node[below]{$u'\in X$} -- (4*\x,0);
	\draw (4*\x,0) -- ( 7*\x/2,-1) node[vtx]{} node[below]{$v'\in X$};
	\draw (4*\x,0) -- ( 10*\x/2,-1) node[vtx]{} node[below]{$\not\in X$};
	\draw (4*\x,0) -- ( 12*\x/2,-1) node[vtx]{} node[below]{$\not\in X$};
\end{scope}
\end{tikzpicture}
\caption{$X = \P(n)_p$ and the edges drawn represent the subset relation. On the left for $k = \ceil{n/2}$, $v$ is an isolated set as defined after Theorem~\ref{thm::main}. On the right for $k \geq \ceil{n/2}$, $v$ is a nearly isolated set as defined before Theorem~\ref{thm::hit}. Both are likely to appear for $p<3/4$ but unlikely for $p>3/4$. While $u$ is not nearly isolated, $u$ and $v$ together act as two nearly isolated sets, as described after Theorem~\ref{thm::hit}. This structure is likely to appear for $p<1/2$ but unlikely for $p>1/2$.}
\label{fig::defects}
\end{figure}
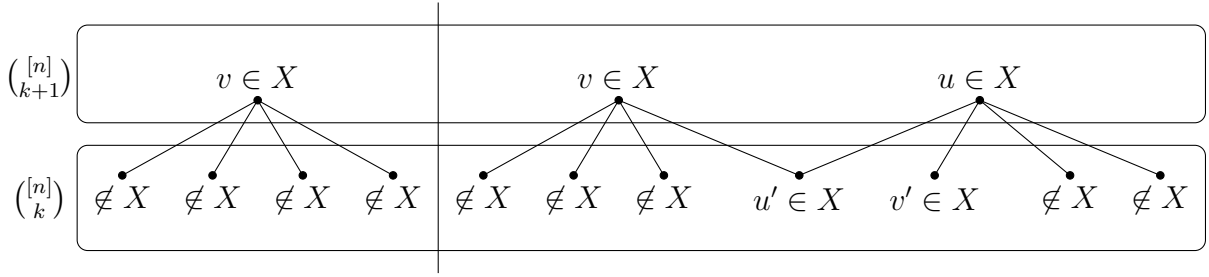

Theorem~\ref{thm::hit} gives a precise description of the maximum antichains of $\P(n)_p$ for constant $p>1/2$. It is easy to conjecture a precise description for smaller, but constant, $p$, as the maximum antichains should consist almost entirely of a whole layer together with `defects' like nearly isolated sets and the structures described in the previous paragraph. We conjecture that this roughly holds for $p$ above the $1/n$ threshold, which we recall is the threshold for the width to be asymptotically equal to the size of a middle layer \cite{BMT,CNM}.
\begin{conjecture}\label{conj::main}
For $p = \omega(1/n)$, the maximum antichains of $\P(n)_p$ are contained in three consecutive layers, with high probability.
\end{conjecture}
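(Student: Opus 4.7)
The plan is to combine the asymptotic concentration of $w(\P(n)_p)$ due to Balogh-Mycroft-Treglown~\cite{BMT} with a refinement of the container and swap arguments underlying Theorem~\ref{thm::hit}. By~\cite{BMT}, for $p=\omega(1/n)$ we have w.h.p.\ that $w(\P(n)_p)=(1+o(1))p\binom{n}{\floor{n/2}}$ and that any maximum antichain $A$ is concentrated on a few layers near the middle. The aim is to tighten this to exactly three consecutive layers by showing that no maximum antichain contains a set from a layer at distance $|t|\geq 2$ from the middle.

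The key quantitative input is a shadow-density fact: for $p=\omega(1/n)$, w.h.p.\ every $v\in\binom{[n]}{\floor{n/2}+t}$ with $|t|\geq 2$ has $\Theta\!\left(\binom{\floor{n/2}+|t|}{|t|}p\right)=\omega(n^{|t|-1})$ many size-$\floor{n/2}$ subsets (or supersets, if $t<0$) lying in $X=\P(n)_p$. Since the expectation is $\omega(n)$, a Chernoff bound beats the $2^n$ union bound over all such $v$. If such a $v$ were in $A$, every one of these $\omega(n)$ middle-layer neighbors in $X$ would be blocked by $v$, and the desired contradiction is a local swap: remove $v$ and insert one of these blocked neighbors into $A$, strictly increasing its size.

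The main obstacle is to guarantee that at least one of these middle-layer neighbors $u\subset v$ is incomparable with every other set of $A$. Such an obstruction can only come from a $w\in A$ with $u\subset w$ or $w\subset u$, forcing $w$ into a layer close to $\floor{n/2}$, so controlling the obstructions amounts to understanding the profile of $A$ on the middle layers. I would run a Sapozhenko-style graph container argument on the comparability graph between adjacent layers, in the spirit of Theorem~\ref{thm::2-layers}, combined with the two-phase container method of~\cite{BMT} to handle the sparse regime: first cover the possible profiles of $A$ on the three middle layers by a small number of coarse containers, then for each container execute a finer count showing that the expected number of surviving $v$ tends to zero. The delicate point, and likely the hardest part of the proof, is tuning the argument so that it rules out $|t|\geq 2$ while \emph{permitting} sets in the two layers immediately adjacent to the middle, where the nearly isolated structures of Theorem~\ref{thm::hit} already reside.
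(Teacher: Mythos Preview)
This statement is a \emph{conjecture} in the paper; the authors do not prove it and explicitly present it as open. There is therefore no paper proof to compare against, and your proposal must stand on its own.

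It does not. There is first a local slip: removing $v$ and inserting \emph{one} middle-layer subset keeps $|A|$ unchanged, so this swap is not a strict improvement and yields no contradiction with maximality; you would need at least two such subsets that are simultaneously free of obstructions. More seriously, the step you yourself call ``the main obstacle'' is the entire content of the conjecture, and your treatment of it is a description of hoped-for tools rather than an argument. The container machinery of Theorem~\ref{thm::2-layers} and Lemmas~\ref{lem::weak}--\ref{lem::strong} requires constant $p>1/2$, and the authors remark that they cannot push this meaningfully below $1/2$, let alone to $p=\omega(1/n)$; the two-phase method of~\cite{BMT} gives only asymptotic information about $w(\P(n)_p)$, not the pointwise layer confinement you need here. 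Your shadow-density observation is correct and is exactly why $\omega(1/n)$ is the conjectured threshold (compare the paper's remark that at $p=C/n$ one sees doubly-isolated sets in layer $\ceil{n/2}+2$), but ruling out obstructions coming from the adjacent layers $\floor{n/2}\pm 1$ --- layers the conjecture \emph{allows} $A$ to occupy --- in this sparse regime is precisely the open problem.
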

If true, the bound on $p$ is tight, as for $p = C/n$ with constant $C$, w.h.p.\ we see some sets $v \in \binom{[n]}{\ceil{n/2}+2} \cap X$ such that removing any one or two elements of $v$ results in a set which is not in $X = \P(n)_p$.

An \emph{$\ell$-chain} in a poset $P$ is $\ell$ elements $A_1, \dots, A_\ell$ such that $A_1 \leq \cdots \leq A_\ell$ in $P$. An antichain is a collection of elements of $P$ which contains no $2$-chain. Erd\H{o}s~\cite{Erdos} generalized Sperner's Theorem, showing that the largest subsets of $\P(n)$ with no $\ell$-chain are some $\ell-1$ layers, in particular, the $\ell-1$ consecutive layers centered about a middle layer. It would be interesting to generalize Theorem~\ref{thm::main} from antichains to collections with no $\ell$-chain, for constant $\ell$. We expect the threshold to be the same.

Theorems~\ref{thm::main} and~\ref{thm::hit} follow from the following theorem, where we take $n$ odd and only consider antichains in the middle two layers. To state the theorem, we need some definitions. Let $G$ be a bipartite graph with $Y$ as one side of the bipartition. We say $A \subseteq Y$ is \emph{closed} if there is no $A' \supsetneq A$ such that $N(A') = N(A)$. We say that $A \subseteq Y$ is \emph{2-linked} if there is no $B \subsetneq A$ such that $N(B)$ is disjoint from $N(A \setminus B)$; in other words, $A \cup N(A)$ induces a connected subgraph of $G$. We denote by $M$ the ``middle two layers'' graph: the bipartite graph with parts $L_k = \binom{[2k-1]}{k}$ and $L_{k-1} = \binom{[2k-1]}{k-1}$ and edges given by the containment relation.

\begin{theorem}\label{thm::2-layers}
Fix $\ell \in \{1,2\}$. Let $p > 1-2^{-2/\ell}$ be constant, and let $X = V(M)_p$. Then there exists a constant $c>0$ such that the following holds. With probability $1-o(1/k)$, every closed, 2-linked, $A \subseteq L_k$ (or $A \subseteq L_{k-1}$) with $\ell \leq |A|$ and $|N(A)| \geq (1+1/2k) |A|$ satisfies $|N(A) \cap X| - |A \cap X| \geq c(|N(A)|-|A|)$.
\end{theorem}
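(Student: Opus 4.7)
By the complementation automorphism $S\mapsto[2k-1]\setminus S$ of $M$ (which swaps $L_k$ and $L_{k-1}$), it suffices to handle the case $A\subseteq L_k$; set $a:=|A|$, $b:=|N(A)|$, and $d:=b-a$, so that $d\geq a/(2k)\geq 1$ by hypothesis. The plan is to combine a Sapozhenko-type graph container bound for the middle-two-layers graph $M$ with a Chernoff concentration estimate for the $p$-random subset $X$, and then union bound over $(a,b)$ and over approximations.

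For each admissible pair $(a,b)$, adapting Sapozhenko's graph container method~\cite{Sap} to $M$ (as in~\cite{HK2,BGL}), I would construct a family $\Phi(a,b)$ of approximating pairs $(F,T)$ with $F\subseteq L_{k-1}$ and $T\subseteq L_k$ such that every closed, 2-linked $A\subseteq L_k$ of the required type is \emph{captured} by some $(F,T)\in\Phi(a,b)$, meaning $F\subseteq N(A)$, $A\subseteq T$, $|N(A)\setminus F|\leq \alpha d$, and $|T\setminus A|\leq \alpha d$, where $\alpha>0$ is a parameter at our disposal. The count $|\Phi(a,b)|$ should be bounded by some $g(a,b,k)$ for which $\sum_{(a,b)} g(a,b,k)\exp(-\beta d)=o(1/k)$, with $\beta>0$ the constant produced in the concentration step. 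The crucial structural input is that $M$ is $k$-regular, and that the combination of 2-linkedness with the near-threshold expansion $b\geq(1+1/(2k))a$ forces $A\cup N(A)$ to form a connected region whose degrees of freedom beyond a short, easily-enumerated skeleton have size $O(d)$.

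For each captured $A$ one has
\[ |N(A)\cap X|-|A\cap X|\;\geq\;|F\cap X|-|T\cap X|, \]
and since $F\subseteq L_{k-1}$ and $T\subseteq L_k$ lie in disjoint parts of the bipartition of $M$, the variables $|F\cap X|$ and $|T\cap X|$ are independent sums of $\mathrm{Bernoulli}(p)$ indicators. Using $|F|-|T|\geq (1-2\alpha)d$, the gap has mean at least $p(1-2\alpha)d$; since $p>1/2$, choosing $\alpha$ small and any $c<p(1-2\alpha)$, Chernoff's inequality gives
\[ \Pr\bigl(|F\cap X|-|T\cap X|<cd\bigr)\;\leq\;\exp(-\beta d) \]
for some $\beta=\beta(p,\alpha,c)>0$. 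The union bound then yields
\[ \Pr(\text{bad event})\;\leq\;\sum_{(a,b)}\sum_{(F,T)\in\Phi(a,b)}\Pr\bigl(|F\cap X|-|T\cap X|<cd\bigr)\;\leq\;\sum_{(a,b)}g(a,b,k)\exp(-\beta d)\;=\;o(1/k). \]

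The main obstacle is the container step: one must produce $\Phi(a,b)$ with approximation quality measured in $d$ rather than $b$, and with $|\Phi(a,b)|$ small enough that the resulting geometric series in $d$ beats any polynomial-in-$k$ overhead sitting in $g$. Sapozhenko's iterative approximation must be localized using the 2-linked assumption, and the extremality of the expansion hypothesis $b\geq(1+1/(2k))a$ is what lets the fluctuating part of $A\cup N(A)$ be enumerated by an $\exp(O(d))$-type bound rather than the hopeless $k^{\Theta(d)}$ or $\binom{|L_k|}{\Theta(d)}$; the small-$d$ regime (including the boundary case $d=1$, $a=O(k)$) must be handled by a direct counting argument separately. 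Once these estimates are in place, the rest of the proof is the standard container-plus-Chernoff union bound sketched above.
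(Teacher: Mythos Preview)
Your Chernoff step is the fatal gap. The random variable $|F\cap X|-|T\cap X|$ is a signed sum of $|F|+|T|$ independent Bernoulli$(p)$ terms, so Hoeffding/Chernoff gives a tail of order $\exp\bigl(-\Theta(d^{2}/(|F|+|T|))\bigr)$, not $\exp(-\beta d)$. Since $|F|+|T|\ge a$ and in the critical regime $d=a/(2k)$ this is only $\exp(-\Theta(d/k))$, far too weak to beat any union bound over containers, even a polynomial-sized family, let alone the $\exp(O(d))$ you aim for. Indeed for a \emph{single} fixed $A$ with $a\ll k^{2}$ the standard deviation $\Theta(\sqrt{a})$ already dwarfs the mean $pd$, so the event $|F\cap X|-|T\cap X|<cd$ has probability bounded away from~$0$; no per-pair tail of the form $\exp(-\beta d)$ exists. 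Relatedly, the container quality you posit---both $|T\setminus A|\le\alpha d$ and $|N(A)\setminus F|\le\alpha d$ with $\alpha$ at your disposal---is not what Sapozhenko's method delivers here: the paper's strong containers have $|S\setminus A|$ and $|N(A)\setminus F|$ each of size $(1+o(1))t$, and only the \emph{difference} $|S|-|F|$ is brought down to $o(t)$.

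The paper circumvents this variance obstacle with a two-level argument that your outline collapses. First, it builds containers $(S,F)$ of cost $o(t)$---in fact $o(t^{2}/|S|)$ and $o(t^{2}/|F|)$---small enough that $|S\cap X|$ and $|F\cap X|$ are each concentrated to within $o(t)$ of their means simultaneously for all containers; on this good event one has $|F\cap X|-|S\cap X|\ge p(|F|-|S|)-o(t)\ge -o(t)$ deterministically. Second, writing $t'=|S\setminus A|+|N(A)\setminus F|=O(t)$, the paper shows (via a fixed reference set $A^{*}$ with the same container) that specifying $A$ given $(S,F)$ costs at most $t'+o(t)$ bits, while the $t'$-trial binomial $|(S\setminus A)\cap X|+|(N(A)\setminus F)\cap X|$ falls below $\Omega(t)$ with probability at most $2^{-t'}e^{-\Omega(t)}$. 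It is precisely here that $p>1/2$ is used, to make $(1-p)^{t'}<2^{-t'}$ beat the $2^{t'}$ choices for $A$; your invocation of $p>1/2$ merely to make the mean positive plays no such role.
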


Note that $M$ is symmetric with respect to its bipartition, so the statement for $A \subseteq L_{k-1}$ easily follows from the statement for $A \subseteq L_k$. The proof of Theorem~\ref{thm::2-layers} is easy for small $A$ (see Section~\ref{sec::large-delta}), so the main difficulty of the proof is handling large $A$; see Section~\ref{sec::intro:idea} for the proof idea. Note that for $\ell = 1$, the condition on $p$ is $p>3/4$, reminiscent of Theorem~\ref{thm::main}, while for $\ell = 2$, the condition is $p>1/2$, reminiscent of Theorem~\ref{thm::hit}. Indeed, for the same reasons as for those theorems, the condition on $p$ in Theorem~\ref{thm::2-layers} would be best possible for all constant $\ell$ if true, and the condition is really only necessary to get the conclusion for small $A$. We conjecture that Theorem~\ref{thm::2-layers} is true in the above form for all constant $\ell$, but we do not know how to prove such a statement. We show that Theorem~\ref{thm::2-layers} implies Theorems~\ref{thm::main} and~\ref{thm::hit} in Section~\ref{sec::reduction}.

Theorem~\ref{thm::2-layers} can be viewed as proving a Hall-type condition for a random induced subgraph of $M$. A natural approach is to compute the probability that $|N(A) \cap X| - |A \cap X|$ is small for a given $A$ and then take a union bound over all $A$. Computing this probability can be done easily and accurately with the Chernoff bound, using only $|A|$ and $|N(A)|$. Sapozhenko's graph containers~\cite{Sap} are precisely the tool to count the number of $A$ with $|A|$ and $|N(A)|$ given.\footnote{Sapozhenko's graph containers are also very general in that they require very little information about the graph. Usually all that is needed is almost-regularity and vertex-isoperimetry. The techniques can even be applied to non-bipartite graphs, as was done by Hamm and Kahn~\cite{HK2} to the Kneser graph.} Unfortunately, the bounds on the number of such $A$ are too weak to make the union bound work. What we must do instead is dive into the construction of the containers to make the union bound more efficient. Hamm and Kahn~\cite{HK2} do this to essentially prove Theorem~\ref{thm::2-layers} with $p=1-\ep$ for some small constant $\ep>0$, but with an important difference: they looked at the graph formed by the layers $\binom{[2k]}{k}$ and $\binom{[2k]}{k-1}$. We extend their proof when the graph is $M$. Unfortunately, our methods do not work for the graph formed by $\binom{[2k]}{k}$ and $\binom{[2k]}{k-1}$, because the degrees of vertices in $\binom{[2k]}{k}$ are smaller than the degrees of vertices in $\binom{[2k]}{k-1}$; see the remark at the end of Section~\ref{sec::strong}.

We mention a variant of Theorem~\ref{thm::2-layers} that is better understood. If we keep the edges of $M$ with probability $p$ instead of the vertices, then we obtain a variant of Theorem~\ref{thm::2-layers} which we can prove holds for much smaller $p$ (and correspondingly larger $\ell$). In a paper with Luo~\cite{BKL}, we prove a theorem like this for the graph formed by the layers $\binom{[2k]}{k}$ and $\binom{[2k]}{k-1}$. Analogous to~\cite{HK2}, this yields a random version of the Erd\H{o}s--Ko--Rado Theorem. See~\cite{BKL,HK2} for the relation between the Erd\H{o}s--Ko--Rado Theorem and the graph formed by the layers $\binom{[2k]}{k}$ and $\binom{[2k]}{k-1}$.

Finally, we mention that the length of $\P(n)_p$ has also been studied. The \emph{length} of a poset is the size of the longest chain. In fact, Kohayakawa, Kreuter, and Osthus~\cite{KKO} determined some rather precise information about the length of $\P(n)_p$. This problem seems to be rather different than determining $w(\P(n)_p)$, using much more of the structure of $\P(n)$ than we do here.

\subsection{Proof idea}\label{sec::intro:idea}

Our proof follows Hamm and Kahn's proof~\cite{HK2}, presented in our notation, with an important detour in Section 5.

To prove Theorem~\ref{thm::2-layers}, we should know how $|N(A)|$ compares to $|A|$. For $M$, the ``middle two layers'' graph, this is given by the Kruskal--Katona theorem, a weaker form of which is due to Lov\'asz. For real $z$ we define $\binom{z}{k} = \frac{z (z-1) \cdot \ldots \cdot (z-k+1)}{k!}$, and we let $\partial_* A = \{v \setminus \{i\} : i \in v \in A\}$ denote the \emph{lower shadow} of $A$.
\begin{theorem}[Kruskal \cite{Kruskal}, Katona \cite{Katona}, Lov\'asz (Problem 13.31(b) of \cite{Lovasz})]\label{thm::KK}
Let $A \subseteq \binom{[n]}{k}$ be of size $\binom{z}{k}$ for $z \geq k$. Then $|\partial_* A| \geq \binom{z}{k-1}$.
\end{theorem}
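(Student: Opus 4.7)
The plan is to prove this by induction on $n$ (with the statement quantified over all $A \subseteq \binom{[n]}{k}$ and all admissible $z$). The base case $n = k$ is immediate: the only nonempty family is $A = \{[k]\}$, with $|A| = 1 = \binom{k}{k}$ and $|\partial_* A| = k = \binom{k}{k-1}$.

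For the inductive step, I first apply the standard left-shift operators $C_{ij}$ (for $i<j$) to reduce to a left-compressed family $A$, using that compressions preserve $|A|$ and never increase $|\partial_* A|$. Then I split $A$ by the largest element: let $A_0 = \{v \in A : n \notin v\} \subseteq \binom{[n-1]}{k}$ and $A_1 = \{v \setminus \{n\} : n \in v \in A\} \subseteq \binom{[n-1]}{k-1}$. A direct case analysis of $\partial_* A$, separating shadows that do or do not contain $n$, shows
\[ |\partial_* A| = |\partial_* A_0 \cup A_1| + |\partial_* A_1|.\]
Left-compression (stability under $C_{i,n}$) implies $A_1 \subseteq \partial_* A_0$: for any $v \in A_1$ and any $i \in [n-1] \setminus v$, the set $v \cup \{i\}$ must lie in $A_0$, so $v \in \partial_* A_0$. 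Consequently $|\partial_* A| = |\partial_* A_0| + |\partial_* A_1|$. Writing $|A_0| = \binom{z_0}{k}$ and $|A_1| = \binom{z_1}{k-1}$ with reals $z_0 \geq k$ and $z_1 \geq k-1$, the inductive hypothesis on the ground set $[n-1]$ yields $|\partial_* A_0| \geq \binom{z_0}{k-1}$ and $|\partial_* A_1| \geq \binom{z_1}{k-2}$.

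To close the induction, I need the algebraic fact that $\binom{z_0}{k} + \binom{z_1}{k-1} = \binom{z}{k}$ implies $\binom{z_0}{k-1} + \binom{z_1}{k-2} \geq \binom{z}{k-1}$. Pascal's identity gives equality at the boundary case $z_0 = z_1 = z-1$, and the argument reduces to verifying that this is the minimizer of the left-hand side as $z_0$ varies over $[k, z]$ (with $z_1$ determined by the sum constraint). This algebraic step — a convexity/derivative analysis for binomial coefficients at real arguments, where the sign of $\tfrac{d}{dz_0}$ and edge behavior near $z_0 = k$ require care — is where I expect the main difficulty to lie. The combinatorial reductions above (shift compressions, the shadow split, and the containment $A_1 \subseteq \partial_* A_0$) are standard and mechanical; the real content of Lov\'asz's smoothing over real $z$ is squeezed into this last inequality.
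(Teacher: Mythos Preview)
The paper does not give a proof of this theorem; it is quoted as a classical result (with citations to Kruskal, Katona, and Lov\'asz's problem book) and used as a black box, first in the proof of Lemma~\ref{lem::iso}. So there is no paper-side argument to compare against.

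Your shifting-plus-induction outline is the standard route to the Lov\'asz form, and the combinatorial reductions (compressions do not increase the shadow, the split $|\partial_* A| = |\partial_* A_0| + |\partial_* A_1|$, and $A_1 \subseteq \partial_* A_0$ via stability under $C_{i,n}$) are all correct. One point to tighten: the final algebraic inequality is \emph{false} without the extra constraint $z_1 \le z_0$. You have this constraint available---it is exactly what $A_1 \subseteq \partial_* A_0$ gives, namely $\binom{z_1}{k-1} \le \binom{z_0}{k-1}$---but your phrasing ``as $z_0$ varies over $[k,z]$'' suggests you are not using it, and indeed near $z_0 = k$ with $z_1$ large the target inequality $\binom{z_0}{k-1}+\binom{z_1}{k-2}\ge\binom{z}{k-1}$ breaks down. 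Once you restrict to $z_1 \le z_0$, the boundary case $z_0=z_1=z-1$ is Pascal's identity and a monotonicity/derivative argument handles the rest, so this is a presentational gap rather than a structural one.
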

Using this, we derive a convenient expression for how much sets in $M$ `expand'; see Proposition 2.3 of~\cite{HK2} for a similar inequality. Recall that $M$ is the graph between $L_k = \binom{[2k-1]}{k}$ and $L_{k-1} = \binom{[2k-1]}{k-1}$ given by containment. All logarithms are natural unless otherwise indicated.
\begin{lemma}\label{lem::iso}
For $A \subseteq L_k$ with $|A| = a \geq 1$, we have
\[ t := |N(A)| - |A| \geq a \frac{1}{k} \log \frac{\binom{2k-1}{k}}{a} .\]
\end{lemma}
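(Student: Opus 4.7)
The plan is to apply the Kruskal--Katona Theorem (Theorem~\ref{thm::KK}) directly, noting that for $A \subseteq L_k$ in the graph $M$, the neighborhood $N(A)$ coincides with the lower shadow $\partial_* A$. Since the function $\binom{z}{k}$ is continuous and monotone in $z$, ranging from $0$ at $z = k-1$ up to $\binom{2k-1}{k}$ at $z = 2k-1$, I can choose a real $z \in [k, 2k-1]$ with $a = \binom{z}{k}$, and write $z = 2k-1-s$ for some $s \in [0, k-1]$ to streamline the algebra.

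With this parametrization, the first step is a lower bound on $t$ in terms of $s$. Kruskal--Katona gives $|N(A)| \geq \binom{z}{k-1}$, and the pointwise identity $\binom{z}{k-1}/\binom{z}{k} = k/(z-k+1) = k/(k-s)$ yields
\[ t \geq a \cdot \frac{s}{k-s}. \]
The second step is a matching upper bound on $\log(\binom{2k-1}{k}/a)$. Expanding as a product,
\[ \frac{\binom{2k-1}{k}}{\binom{z}{k}} = \prod_{i=0}^{k-1} \frac{2k-1-i}{z-i} = \prod_{i=0}^{k-1} \frac{2k-1-i}{2k-1-s-i}, \]
and a quick derivative check shows each factor is increasing in $i$, hence maximized at $i = k-1$ with value $k/(k-s) = 1 + s/(k-s)$. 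Therefore $\log(\binom{2k-1}{k}/a) \leq k \log(1 + s/(k-s))$.

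Combining these two bounds, the desired inequality $t \geq (a/k) \log(\binom{2k-1}{k}/a)$ reduces to
\[ \frac{s}{k-s} \geq \log\!\left(1 + \frac{s}{k-s}\right), \]
which is the standard inequality $u \geq \log(1+u)$ for $u \geq 0$, applied with $u = s/(k-s)$. I do not expect any genuine obstacle: the Kruskal--Katona Theorem carries the combinatorial content, and the remaining work is essentially the observation that the two quantities to be compared can both be controlled by the single parameter $s$ through mutually compatible estimates. The only mild care needed is making sure $z \geq k$ so Theorem~\ref{thm::KK} applies (automatic from $a \geq 1$) and handling the trivial case $a \leq 1$ separately if desired.
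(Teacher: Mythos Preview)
Your proof is correct and follows essentially the same approach as the paper: both parametrize $a = \binom{2k-1-s}{k}$, apply Kruskal--Katona to obtain $t \geq a \cdot s/(k-s)$, and then verify that $s/(k-s) \geq \frac{1}{k}\log\bigl(\binom{2k-1}{k}/a\bigr)$. The only minor difference is in this last verification---the paper shows the difference has nonnegative derivative in $s$, whereas you bound the product $\prod_i \frac{2k-1-i}{2k-1-s-i}$ term-by-term by its largest factor and invoke $\log(1+u)\leq u$; both arguments hinge on the same observation that the $i=k-1$ term dominates.
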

\begin{proof}
Let $|A| = \binom{2k-1-y}{k}$ where $0 \leq y \leq k-1$. By Theorem~\ref{thm::KK}, we have
\[ |N(A)| - |A| \geq \binom{2k-1-y}{k-1} - \binom{2k-1-y}{k} = \frac{y}{k-y} \binom{2k-1-y}{k} .\]
It suffices to show that $f(y) \geq 0$, where
\[ f(y) = \frac{y}{k-y} - \frac{1}{k} \log \frac{\binom{2k-1}{k}}{\binom{2k-1-y}{k}} .\]
Indeed, $f(0) = 0$, and
\[ f'(y) = \frac{k}{(k-y)^2} - \frac{1}{k} \sum_{i=0}^{k-1} \frac{1}{2k-1-i-y} \geq \frac{k}{(k-y)^2} - \frac{1}{k-y} \geq 0 .\qedhere\]
\end{proof}

The rough idea of the proof of Theorem~\ref{thm::2-layers} is to find $S$ and $F$ such that $A \subseteq S$, $F \subseteq N(A)$, $|S| < |F| - \ell$ for some $\ell$, and $|S \cap X| = p|S| \pm p\ell/2$, $|F \cap X| = p|F| \pm p\ell/2$ with (very) high probability. Then we have
\[ |A \cap X| \leq |S \cap X| \leq p|S| + p\ell/2 \leq p|F| - p\ell/2 \leq |F \cap X| \leq |N(A) \cap X| .\]
For each $A$, we might show the existence of $S$ and $F$ with the first three properties, and then show that across all $A$, there are actually few choices for $S$ and $F$ --- this is the heart of Sapozhenko's graph container method~\cite{Sap}. Using standard concentration arguments we can then get that $|S \cap X|$ and $|F \cap X|$ are concentrated about their means with (very) high probability for all choices of $S$ and $F$.

Unfortunately, the above scenario does not happen perfectly. In fact, all we can guarantee is $|S| \leq |F| + \ell$, so we must also use that $|(S \setminus A) \cap X|$ and $|(N(A) \setminus F) \cap X|$ are likely large in order to get the above inequality chain to work out. This also allows us to get a stronger lower bound on $|N(A) \cap X| - |A \cap X|$.

We break the proof into three phases. In the first phase, we obtain `weak' versions of $S$ and $F$ described in the above sketch; see Lemma~\ref{lem::weak}, which is proved in Section~\ref{sec::weak}. In the second phase, we strengthen these weak versions; see Lemma~\ref{lem::strong}, which is proved in Section~\ref{sec::strong}. In the final phase, we give the argument showing $|N(A) \cap X| - |A \cap X|$ is large for all $A$ with (very) high probability, which we prove in Section~\ref{sec::use-containers}.

We first state the two container lemmas formally (without asymptotic notation), and then we develop some notions which help with the intuition behind the lemmas and the mechanics of their proofs. Let
\begin{equation}\label{eq::defA}
\A_t = \left\{ A \subseteq L_k : A \text{ is 2-linked}, t = |N(A)|-|A| \geq |A|/2k \right\}
\end{equation}
be the set of all subsets of $L_k$ with a given expansion property. As we shall see, $t$ provides a nice scale for measuring various aspects of the containers.
\begin{lemma}\label{lem::weak}[Weak container lemma]
Let $p$ be a constant and let $X = V(M)_p$. There exists a global constant $K$, such that for every $\ep>0$, there exists $k_0$ such that for all $k \geq k_0$, the following holds. For every $t \geq k \log^7 k$, there exists a collection $\mathcal{W}_t'$ of pairs $(S',F')$ with $|\mathcal{W}_t'| \leq 2^{\ep t}$ such that the following holds with probability at least $1-\exp(-k/\ep)$: for all $A \in \A_t$, there exists $(S',F') \in \mathcal{W}_t'$ with
\begin{itemize}
\item $A \subseteq S'$,
\item $F' \subseteq N(A)$,
\item $|S'| \leq |F'| + Kt$,
\item $||S' \cap X|-p|S'|| \leq \ep t$,
\item $||F' \cap X|-p|F'|| \leq \ep t$.
\end{itemize}
\end{lemma}
Lemma~\ref{lem::weak} gives a `small' collection $\mathcal{W}_t'$ of `containers' $(S',F')$ which `cover' $\A_t$. Furthermore, these $S'$ and $F'$ are `concentrated' in the sense that their size in the random subgraph of $M$ is close to their expected size. We are only able to obtain $\mathcal{W}_t'$ for sufficiently large $t$ in terms of $k$, but fortunately the proof of Theorem~\ref{thm::2-layers} for small $t$ is relatively easy; see Section~\ref{sec::large-delta}.

Hamm and Kahn~\cite{HK2} essentially proved Lemma~\ref{lem::weak} in a slightly different graph. Lemma~\ref{lem::weak}, through the argument given in Section~\ref{sec::use-containers}, yields Theorem~\ref{thm::main} with $p = 1-\ep_K$, where $\ep_K$ is a constant depending only on the $K$ in Lemma~\ref{lem::weak}. We present their proof not just for completeness, but also to phrase it in a language which we feel brings the formal proof closer to heuristics. Our other main contribution to the theory of graph containers is the transformation of `concentrated weak containers' into `concentrated strong containers,' which is essentially taking $K$ arbitrarily small in Lemma~\ref{lem::weak}, as stated in the next lemma.
\begin{lemma}\label{lem::strong}[Strong container lemma]
Let $p$ be a constant and let $X = V(M)_p$. For every $\ep > 0$, there exists $k_0$ such that for all $k \geq k_0$ the following holds. For every $t \geq k \log^7 k$, there exists a collection $\mathcal{W}_t$ of pairs $(S,F)$ with $|\mathcal{W}_t| \leq 2^{\ep t}$ such that the following holds with probability at least $1-\exp(-k/\ep)$: for all $A \in \A_t$, there exists $(S,F) \in \mathcal{W}_t$ with
\begin{itemize}
\item $A \subseteq S$,
\item $F \subseteq N(A)$,
\item $|S| \leq |F| + \ep t$
\item $||S \cap X|-p|S|| \leq \ep t$,
\item $||F \cap X|-p|F|| \leq \ep t$.
\end{itemize}
\end{lemma}
The proof of Lemma~\ref{lem::strong} essentially follows a proof of Sapozhenko~\cite{Sap}, as described by Galvin (Lemma~5.5 in~\cite{Galvin}), except that we must modify it to ensure these `concentration' conditions.

\subsection{Terminology}\label{sec::intro:terms}

We heavily use asymptotic notation as it simplifies our calculations and definitions. Recall that $f(n) = O(g(n))$, or equivalently $g(n) = \Omega(f(n))$, if there exists a constant $K$ such that $f(n) \leq Kg(n)$ for all sufficiently large $n$. We say $f(n) = o(g(n))$, or equivalently $g(n) = \omega(f(n))$, if $f(n)/g(n)$ tends to $0$ as $n$ tends to infinity. All uses of asymptotic notation are with $n$ or $k$ tending to infinity, as appropriate, and hence some inequalities only hold for $n$ or $k$ sufficiently large. We will often abuse notation and write, for example, $f(n) \leq O(g(n))$ in an inequality chain, to mean $f(n) = O(g(n))$. Most uses of $O(\cdot)$ or $o(\cdot)$ in this paper could be replaced by a sufficiently large or small constant, respectively. However, we prefer the asymptotic notation because it hides irrelevant arithmetic on these expressions. 

We introduce some nonstandard terminology which makes the statements and proofs of Lemmas~\ref{lem::weak} and~\ref{lem::strong} more digestible. The way we produce the containers of Lemmas~\ref{lem::weak} and~\ref{lem::strong} is by taking an arbitrary $A \in \A_t$ and creating several derivative sets from it. We need a convenient way to count the number of choices for these derivative sets across all $A \in \A_t$.
\begin{definition}
Let $\Y = \{Y_A : A \in \A_t\}$, where $Y_A$ is an object which depends on $A$. The \emph{cost} of $\Y$ is $\log_2|\Y|$. When the dependence of $Y_A$ on $A$ is understood, we often leave this collection $\Y$ implicit and write $\cost(Y)$ for the cost of $\Y$. We let $\cost((Y_1, \dots, Y_\ell)) = \cost(Y_1, \dots, Y_\ell)$ for ease of notation.

We denote by $\cost(Y|Z)$ the cost of $Y$ given a choice of $Z$. More formally, for $\Z = \{Z_A : A \in \A_t\}$, $\cost(Y|Z) = \log_2\left|\left\{ Y_A : A \in \A_t, Z_A = Z \right\}\right|$ depends on the choice of $Z$.
\end{definition}
Intuitively, the cost of an object is the $\log_2$ of the number of choices for that object. For example, the cost of a subset $Y$ of a given set $Z$ is at most $|Z|$, while the cost of a subset $Y$ (of $Z$) of size at most a given $\ell$ is at most
\[ \log_2 \binom{|Z|}{\leq \ell} = \log_2\left( \binom{|Z|}{0} + \binom{|Z|}{1} + \cdots + \binom{|Z|}{\ell} \right) \leq \ell \log_2 \frac{e|Z|}{\ell} ,\]
a fact which we will use often without mention. We give some easy facts about cost in the next proposition, which we note are analogous to facts about binary entropy.

\begin{proposition}\label{prop::costfacts}
For any $Y$ and $Z$, we have $\cost(Y,Z) \leq \cost(Z) + \max_Z \cost(Y|Z)$. If $Y$ is determined by $Z$, that is, if there is a surjective function $f$ from $\Z = \{Z_A : A \in \A_t\}$ to $\Y = \{Y_A : A \in \A_t\}$ with $f(Z_A) = Y_A$ for all $A \in \A_t$, then $\cost(Y) \leq \cost(Z)$.
\end{proposition}

We also need a convenient way to discuss concentration in the random subgraph of $M$.
\begin{definition}
A collection $\Y$ of subsets of $V(M)$ is \emph{$(\ep,t,p)$-concentrated} if $||Y \cap X|-p|Y|| \leq \ep t$ for all $Y \in \Y$ with probability at least $1-\exp(-k/\ep)$, where $X = V(M)_p$.
\end{definition}

The following are easy consequences of this definition which we use extensively.
\begin{proposition}\label{prop::autoconc}
If $\Y$ is a collection of subsets of $V(M)$ and $\max_{Y \in \Y} |Y| \leq \ep t$, then $\Y$ is $(\ep,t,p)$-concentrated for every $p$.
\end{proposition}

\begin{proposition}\label{prop::unionconc}
Let $\Y = \{Y_A : A \in \A_t\}$ and $\Z = \{Z_A : A \in \A_t\}$ be $(\ep,t,p)$-concentrated. If $Y_A \cap Z_A = \emptyset$ for all $A \in \A_t$, then $\{Y_A \cup Z_A : A \in \A_t\}$ is $(2\ep,t,p)$-concentrated. If $Y_A \subseteq Z_A$ for all $A \in \A_t$, then $\{Z_A \setminus Y_A : A \in \A_t\}$ is $(2\ep,t,p)$-concentrated.
\end{proposition}

\begin{proof}
Let $X = V(M)_p$. For the first claim, observe that since $Y_A$ and $Z_A$ are disjoint, we have
\[ ||(Y_A \cup Z_A) \cap X| - p|Y_A \cup Z_A|| = ||Y_A \cap X| + |Z_A \cap X| - p|Y_A| - p|Z_A|| \]
\[ \leq ||Y_A \cap X| - p|Y_A|| + ||Z_A \cap X| - p|Z_A|| \leq 2\ep t .\]
For the second claim, since $Y_A \subseteq Z_A$, we have
\[ ||(Z_A \setminus Y_A) \cap X| - p|Z_A \setminus Y_A|| = ||Z_A \cap X| - |Y_A \cap X| - p|Z_A| + p|Y_A|| \]
\[ \leq ||Z_A \cap X| - p|Z_A|| + ||Y_A \cap X| - p|Y_A|| \leq 2\ep t . \qedhere\]
\end{proof}

\begin{proposition}\label{prop::Chern}
Let $\Y$ be a collection of subsets of $V(M)$. If
\[ \cost(Y) = \log_2|\Y| \leq \frac{\ep^2}{2p+1} \cdot \frac{t^2}{\max_{Y \in \Y} |Y|} \]
and
\[ \frac{5k}{\ep} \leq \frac{\ep^2}{2p+1} \cdot \frac{t^2}{\max_{Y \in \Y} |Y|} ,\]
then $\Y$ is $(\ep,t,p)$-concentrated. 
\end{proposition}

\begin{proof}
Let $X = V(M)_p$, let $m = \max_{Y\in\Y} |Y|$, and let $Y \in \Y$. If $|Y| \leq \ep t$, then automatically we have $||Y \cap X| - p|Y|| \leq \ep t$. Otherwise, the standard Chernoff bound states that the probability that $|Y \cap X| - p|Y| \leq -\ep t$ is at most
\[ \exp\left( - \frac{(\ep t)^2}{2p|Y|} \right) \leq \exp\left( - \frac{(\ep t)^2}{(2p+1)m} \right) ,\]
and the probability that $|Y \cap X| - p|Y| \geq \ep t$ is at most
\[ \exp\left( - \frac{(\ep t)^2}{2p|Y|+\ep t} \right) \leq \exp\left( - \frac{(\ep t)^2}{(2p+1)m} \right) \]
using that $|Y| \geq \ep t$. Taking a union bound over all $Y \in \Y$ with $|Y| \geq \ep t$, we have that the probability that $||Y \cap X| - p|Y|| \leq \ep t$ for all $Y \in \Y$ is at least
\[ 1 - 2 \exp\left( \cost(Y) \log(2) - \frac{(\ep t)^2}{(2p+1)m} \right) \geq 1 - 2 \exp\left( - \frac{(\ep t)^2}{4(2p+1)m} \right) \geq 1 - \exp\left( - k/\ep \right) . \qedhere\]
\end{proof}

The conditions of Proposition~\ref{prop::Chern} ensure that the cost of $Y$ is small and that $Y$ is not too large as to make the desired concentration unlikely. These may be stated in a slightly obtuse manner, but it is convenient to just compute $\cost(Y)$ and $t^2 / \max_{Y\in\Y} |Y|$ and confirm that the latter is much greater than the former and $k$.

In our proofs, typically $t=|N(A)|-|A|$ is given, $\mathcal{Y}$ is implicitly understood to be the collection of all choices of some variable $Y$ which is derived from $A \in \A_t$, $p$ is understood to be a constant greater than $1/2$, and $\ep = o(1)$, which the reader may think of as a sufficiently small constant. To minimize the minutiae, we often omit $\ep$, $t$, and $p$ and just say `$Y$ is concentrated' for short. This allows us to not deal with the changing $\ep$, say in Proposition~\ref{prop::unionconc}, and it simplifies the conditions of Proposition~\ref{prop::Chern} to
\[ \frac{t^2}{|Y|} \geq \max\left\{\omega(\cost(Y)), \omega(k)\right\} \]
for all $Y$.

The final definition we introduce concerns the two types of containers we find.
\begin{definition}
We say that $(S', F')$ is a \emph{weak container} for $A \in \A_t$ if $A \subseteq S'$, $F' \subseteq N(A)$, $|S' \setminus A| = O(t)$, and $|N(A) \setminus F'| = O(t)$. We say that $(S,F)$ is a \emph{strong container} for $A \in \A_t$ if $(S,F)$ is a weak container and also satisfies $|S| \leq |F| + o(t)$.
\end{definition}
Note that, assuming $A \subseteq S'$ and $F' \subseteq N(A)$, the conditions $|S' \setminus A| = O(t)$ and $|N(A) \setminus F'| = O(t)$ together are equivalent to $|S'| \leq |F'| + O(t)$. This explains the way in which strong containers are a stronger version of weak containers.

Now we can conveniently summarize the two container lemmas. Lemma~\ref{lem::weak} states that there exists concentrated weak containers for all $A \in \A_t$ with $t \geq k\log^7 k$ at cost $o(t)$, where a container $(S,F)$ is concentrated if both $S$ and $F$ are concentrated. Lemma~\ref{lem::strong} states that there exists concentrated strong containers for all $A \in \A_t$ with $t \geq k\log^7 k$ at cost $o(t)$.

We prove Lemma~\ref{lem::weak} in Section~\ref{sec::weak} and Lemma~\ref{lem::strong} in Section~\ref{sec::strong}. In Section~\ref{sec::use-containers}, we use these lemmas to prove Theorem~\ref{thm::2-layers}.

\section{Theorem~\ref{thm::2-layers} implies Theorem~\ref{thm::hit}}\label{sec::reduction}

Hamm and Kahn~\cite{HK2} give a sketch of how Theorem~\ref{thm::2-layers} implies Theorem~\ref{thm::main}. More work is required to deduce Theorem~\ref{thm::hit}. We first generalize Theorem~\ref{thm::2-layers} to apply to any pair of consecutive layers. For $A \subseteq \P(n)$, let the \emph{lower shadow} of $A$ be $\partial_* A = \{v \setminus \{i\} : i \in v \in A\}$ and the \emph{upper shadow} of $A$ be $\partial^* A = \{v \cup \{i\} : i \not\in v \in A\}$. When $A \subseteq \binom{[n]}{k}$ with $k \geq \ceil{n/2}$, let $\partial A = \partial_* A$, and when $k \leq \floor{n/2}$, let $\partial A = \partial^* A$, which we simply call the \emph{shadow} of $A$. This definition is non-standard but helps simplify our statements, so we use only when it will not cause confusion. A simple consequence of the Kruskal--Katona Theorem is that $\partial A$ is much bigger than $A$ for $A$ outside of the two middle layers.

\begin{proposition}\label{prop::expoutmid}
For $A \subseteq \binom{[n]}{k}$ and $k \in [n] \setminus \{\ceil{n/2},\floor{n/2}\}$, we have $|\partial A| \geq \left(1+\frac{1}{k}\right)|A|$.
\end{proposition}

\begin{proof}
By symmetry, we may assume that $k>\ceil{n/2}$. By Theorem~\ref{thm::KK}, for $|A| = \binom{x}{k}$ we have
\[ |\partial A| = |\partial_* A| \geq \binom{x}{k-1} = \frac{k}{x-k+1} \binom{x}{k} \geq \frac{k}{n-k+1} |A| \geq \frac{n/2+1}{n/2} |A| \geq \left( 1 + \frac{1}{k} \right) |A| .\qedhere\]
\end{proof}

Extending the definitions from $M$, we say that $A \subseteq \binom{[n]}{k}$ is \emph{closed} if there is no $A \subsetneq A' \subseteq \binom{[n]}{k}$ with $\partial A = \partial A'$, and we say that $A \subseteq \binom{[n]}{k}$ is \emph{2-linked} if there is no $B \subsetneq A$ such that $\partial B$ is disjoint from $\partial (A \setminus B)$.
\begin{lemma}\label{lem::2-layers}
Fix $\ell \in \{1,2\}$. Let $p > 1-2^{-2/\ell}$ be constant, and let $X = \P(n)_p$. Then there exists a constant $c>0$ such that the following holds. With high probability, for every $k \in [n] \setminus \{\ceil{n/2},\floor{n/2}\}$, every closed, 2-linked $A \subseteq \binom{[n]}{k}$ with $|A| \geq \ell$ satisfies $|\partial A \cap X| - |A \cap X| \geq c(|\partial A| - |A|) > 0$.
\end{lemma}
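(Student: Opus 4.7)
The plan is to run the three-phase container argument of Theorem~\ref{thm::2-layers} on each bipartite shadow graph $G_k$ between $\binom{[n]}{k}$ and $\binom{[n]}{k-1}$ for $k \in [n]\setminus\{\lceil n/2\rceil, \lfloor n/2\rfloor\}$, and then to union-bound over $k$. By the complementation map $v \mapsto [n]\setminus v$, which preserves the distribution of $\P(n)_p$ and swaps upper and lower shadows, it suffices to handle $k > \lceil n/2\rceil$ with $\partial = \partial_*$.

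The graph $G_k$ differs from $M$ only in being biregular rather than regular: its upper-side degree is $k$ and its lower-side degree is $n-k+1 < k$. All the structural inputs still hold. Kruskal--Katona (Theorem~\ref{thm::KK}) gives the shadow expansion, and for $k > \lceil n/2\rceil$ a simple edge count gives $|\partial A|/|A| \geq k/(n-k+1) > 1$ automatically, so the expansion hypothesis ``$|N(A)| \geq (1+1/2k)|A|$'' of Theorem~\ref{thm::2-layers} is automatic here. An analogue of Lemma~\ref{lem::iso} follows by repeating the Kruskal--Katona computation with $2k-1$ replaced by $n$, and the analogues of Lemma~\ref{lem::weak} and Lemma~\ref{lem::strong} then follow by the same arguments with the degrees $k$ and $n-k+1$ substituted wherever they appear. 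Applying the deduction of Section~\ref{sec::use-containers} produces, for each fixed $k$, the inequality $|\partial A \cap X| - |A\cap X| \geq c(|\partial A|-|A|)$ uniformly over closed 2-linked $A \subseteq \binom{[n]}{k}$ with $|A|>1$, with failure probability $o(1/n)$.

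Since there are at most $n$ choices of $k$, a union bound gives total failure probability $o(1)$, which yields the ``with high probability'' conclusion of Lemma~\ref{lem::2-layers}. The main obstacle will be to verify that the weak-container construction of Section~\ref{sec::weak} and the strong-container refinement of Section~\ref{sec::strong} adapt cleanly to the biregular setting; care is needed where the proofs implicitly exploit the equality of the two side degrees in $M$ (for example in edge counts between $S'$ and $F'$, or in per-vertex cost estimates). These only require substituting the correct degrees and re-checking that the expansion still dominates the error terms, which is in fact easier for $k$ further from $n/2$, since then $|\partial A|/|A|$ is bounded away from $1$ by a much larger margin than in the borderline case $k=\lceil n/2\rceil+1$, where the gap can be as small as $\Theta(1/n)$ and the constant $c$ may need to be adjusted accordingly.
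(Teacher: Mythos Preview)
Your plan is sound in outline—the paper itself remarks (at the end of Section~\ref{sec::strong}) that the container machinery survives in a biregular graph provided the $A$-side degree exceeds the $N(A)$-side degree, which is exactly the situation in $G_k$ for $k>\lceil n/2\rceil$—but it is far more work than necessary, and some of the adaptations are not quite the drop-in substitutions you describe. For instance, the identity $|E(G,\bar A)|=tk$ of~\eqref{eq::egabar}, used throughout Sections~\ref{sec::weak} and~\ref{sec::strong}, becomes $|G|(n-k+1)-|A|k$ in $G_k$, which is only bounded above by $t(n-k+1)$ rather than equal to a multiple of $t$; and the Lemma~\ref{lem::iso} calculation does not generalize by merely ``replacing $2k-1$ by $n$'' but picks up an additive shift of $2k-n-1$ throughout.

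The paper bypasses all of this with one observation: the lower shadow $\partial_*$ does not depend on the ambient ground set. Since $k>\lceil n/2\rceil$ forces $2k-1\geq n$, one may simply regard $A\subseteq\binom{[n]}{k}$ as a subset of $L_k=\binom{[2k-1]}{k}$; then $N(A)$ computed in the regular graph $M$ is exactly $\partial_* A$, the Kruskal--Katona estimate yields $|\partial_* A|\geq(1+1/k)|A|$ so the expansion hypothesis of Theorem~\ref{thm::2-layers} is met, and that theorem applies as a black box (coupling $X$ restricted to layers $k,k-1$ with $V(M)_p$). A union bound over the at most $n$ values of $k$, each failing with probability $o(1/k)$, then gives the lemma. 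Your route would eventually reach the same conclusion, but the embedding trick turns a section-long biregular re-verification into a five-line proof that never leaves the regular graph $M$.
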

\begin{proof}
As the other case is symmetric to this one, we assume $k>\ceil{n/2}$. Let $A \subseteq \binom{[n]}{k}$ be closed and 2-linked with $k \in [n] \setminus \{\ceil{n/2},\floor{n/2}\}$. Viewing $A$ as a subset of $\binom{[2k-1]}{k}$, which we may do because $\partial_*$ does not depend on $n$, and noting that $|\partial A| \geq (1+1/2k)|A|$ by Proposition~\ref{prop::expoutmid}, the conclusion of Theorem~\ref{thm::2-layers} gives the desired conclusion. Because this holds with probability $1-o(1/n)$, taking a union bound over all $k$ finishes the proof.
\end{proof}

Although Theorem~\ref{thm::hit} implies Theorem~\ref{thm::main}, we give a proof of Theorem~\ref{thm::main} first as a warm-up for Theorem~\ref{thm::hit}. A \emph{2-linked component} of $A \subseteq \binom{[n]}{k}$ is a maximal 2-linked subset of $A$. For $A \subseteq \binom{[n]}{k}$, let $[A]$ denote the \emph{closure} of $A$, the largest $A'$ with $\partial A = \partial A'$.

\begin{proof}[Proof of Theorem~\ref{thm::main}]
Let $p>3/4$ be constant, and let $X = \P(n)_p$. We assume that the conclusions of Theorem~\ref{thm::2-layers} and Lemma~\ref{lem::2-layers} hold for $\ell = 1$, which happens with high probability.

Let $B$ be a maximum antichain in $X$, and let $B_\ell = B \cap \binom{[n]}{\ell}$ be the layers of $B$. Consider the nonempty layer $B_k$ of $B$ with the largest $k$. If $k > \ceil{n/2}$, then letting $B_k'$ be a 2-linked component of $B_k$, we claim that $B' = (B \setminus B_k') \cup (\partial_* B_k' \cap X)$ is a larger antichain than $B$ in $X$. Indeed, by definition, $B'$ is an antichain in $X$, and $|B'| > |B|$ because of Lemma~\ref{lem::2-layers} applied to the closure of $B_k'$. This contradicts the maximality of $B$, and hence $k \leq \ceil{n/2}$.

Similarly, the lowest nonempty layer of $B$ is at layer number $\floor{n/2}$ or above. If $n$ is even, this implies that $B$ is contained in the middle layer, as desired. If $n$ is odd, one of $[B_{\floor{n/2}}]$ or $[B_{\ceil{n/2}}]$ has size at most $\frac{1}{2} \binom{n}{\floor{n/2}}$ by Theorem~\ref{thm::KK}. Since $M$ is symmetric with respect to $L_k$ and $L_{k-1}$, we may suppose it is $B_{\ceil{n/2}}$ whose closure has size at most $\frac{1}{2} \binom{n}{\floor{n/2}}$, so by Lemma~\ref{lem::iso}, $|\partial_* B_{\ceil{n/2}}| \geq (1+\frac{1}{2\ceil{n/2}})|[B_{\ceil{n/2}}]|$. If $B_{\ceil{n/2}} \neq \emptyset$, then $B_{\floor{n/2}} \cup (\partial_* B_{\ceil{n/2}} \cap X)$ is a larger antichain than $B$ in $X$ by Theorem~\ref{thm::2-layers}, contradicting the maximality of $B$. Thus $B$ is contained within one of the middle two layers.
\end{proof}

To prove Theorem~\ref{thm::hit} using Theorem~\ref{thm::2-layers} and Lemma~\ref{lem::2-layers}, we modify the previous argument by taking special care of 2-linked components of size $1$ in the layers of the maximum antichain.

\begin{proof}[Proof of Theorem~\ref{thm::hit}]
Let $p=1/2+\ep$, where $\ep>0$ is constant, and let $X = \P(n)_p$ be such that the conclusions of Theorem~\ref{thm::2-layers} and Lemma~\ref{lem::2-layers} hold for $\ell=2$, which happens with high probability.

We first claim that with high probability the number of nearly isolated sets is at most $2^{n/2}$. Recall that $v \in \P(n)$ is nearly isolated if $v \in X$ but $|\partial \{v\} \cap X| \leq 1$. Since $|\partial\{v\}| \geq n/2$ for every $v$, the expected number of such sets is at most
\[ 2^n p \left( (1-p)^{n/2} + \frac{n}{2} p(1-p)^{n/2-1} \right) \leq 2^{n/2} \cdot O(n e^{-\ep n}) .\]
The claim follows by Markov's inequality, so we assume the claim holds for $X$.

Let $B$ be a maximum antichain in $X$, and let $B_\ell = B \cap \binom{[n]}{\ell}$ be the layers of $B$. Consider the nonempty layer $B_k$ of $B$ with the largest $k$. If $k > \ceil{n/2}$ and $B_k$ has a 2-linked component $B_k'$ of size greater than $1$, then $(B \setminus B_k') \cup (\partial_* B_k' \cap X)$ is a larger antichain than $B$ in $X$, as seen by applying Lemma~\ref{lem::2-layers} to the closure of $B_k'$. If $B_k$ has a 2-linked component $B_k'$ of size $1$ which is not a nearly isolated set, then by definition, $(B \setminus B_k') \cup (\partial_* B_k' \cap X)$ is larger antichain than $B$ in $X$. Thus if $k>\ceil{n/2}$, $B_k$ consists entirely of nearly isolated sets, and hence $|B_k| \leq 2^{n/2}$.

If $k>\ceil{n/2}+1$, we claim that
\[ (B \setminus (B_k \cup B_{k-1})) \cup ((\partial_* \partial_* B_k \cup \partial_* B_{k-1}) \cap X) \]
is a larger antichain than $B$ in $X$. If $|\partial_* B_k| \cdot \log n \leq |B_{k-1}|$, then by Proposition~\ref{prop::expoutmid}, we have
\[ |\partial_*(\partial_* B_k \cup B_{k-1})| - |[\partial_* B_k \cup B_{k-1}]| \geq \frac{1}{k} |\partial_* B_k \cup B_{k-1}| \geq \frac{\log n}{2k} |\partial_* B_k| .\]
If $|\partial_* B_k| \cdot \log n \geq |B_{k-1}|$, then $|\partial_* B_k \cup B_{k-1}| \leq (1+\log n) |\partial_* B_k| \leq (1+\log n) n 2^{n/2}$. Since $[A] \subseteq \partial^* \partial_* A$ for $A \subseteq \binom{[n]}{k}$ with $k \geq \ceil{n/2}$, we have that $|[\partial_* B_k \cup B_{k-1}]| \leq (1+\log n) n^3 2^{n/2}$, and hence by Lemma~\ref{lem::iso}, viewing $[\partial_* B_k \cup B_{k-1}]$ as a subset of $\binom{[2k-3]}{k-1}$, we have
\[ |\partial_*(\partial_* B_k \cup B_{k-1})| - |[\partial_* B_k \cup B_{k-1}]| \geq \Omega(|[\partial_* B_k \cup B_{k-1}]|) \geq \frac{\log n}{k} |\partial_* B_k| .\]
Applying Lemma~\ref{lem::2-layers} to the closure of each 2-linked component of $\partial_* B_k \cup B_{k-1}$ we get
\[ |\partial_*(\partial_* B_k \cup B_{k-1}) \cap X| \geq |(\partial_* B_k \cup B_{k-1}) \cap X| + c(|\partial_*(\partial_* B_k \cup B_{k-1})| -|[\partial_* B_k \cup B_{k-1}]|) \]
\[ \geq |B_{k-1}| + c\frac{\log n}{2k} |\partial_* B_k| > |B_{k-1}| + \frac{1}{k} |\partial_* B_k| = |B_{k-1}| + |B_k| ,\]
as desired, where the last equality holds because $B_k$ consists entirely of 2-linked components of size $1$. Thus $k \leq \ceil{n/2}+1$.

A similar argument yields that the lowest layer of $B$ is in layer number at least $\floor{n/2}-1$. If $n$ is even, then since $B_{n/2+1}$ and $B_{n/2-1}$ consist entirely of nearly isolated sets, we are done.

Now assume $n$ is odd. By Theorem~\ref{thm::KK}, one of $[B_{\ceil{n/2}}]$ or $[B_{\floor{n/2}}]$ has size at most $\frac{1}{2} \binom{n}{\ceil{n/2}}$. Without loss of generality, suppose it is $B_{\ceil{n/2}}$ whose closure has size at most $\frac{1}{2} \binom{n}{\ceil{n/2}}$. Then the above argument works similarly for $k=\ceil{n/2}+1$, using Theorem~\ref{thm::2-layers} instead of Lemma~\ref{lem::2-layers}, where we obtain lower bounds on
\[ |\partial_*(\partial_* B_k \cup B_{k-1})| - |[\partial_* B_k \cup B_{k-1}]| \]
required for Theorem~\ref{thm::2-layers} from $|B_k| \leq 2^{n/2}$, $|[B_{\ceil{n/2}}]| \leq \frac{1}{2} \binom{n}{\ceil{n/2}}$, and Lemma~\ref{lem::iso}. Thus $B_{\ceil{n/2}+1}$ is empty. Similar to an argument at the beginning of the proof, since $|[B_{\ceil{n/2}}]| \leq \frac{1}{2} \binom{n}{\ceil{n/2}}$, we get that $B_{\ceil{n/2}}$ consists entirely of nearly isolated sets by Theorem~\ref{thm::2-layers}. This means that $B$ consists of $B_{\floor{n/2}}$ together with nearly isolated vertices in $\binom{[n]}{\floor{n/2}+1}$ and $\binom{[n]}{\floor{n/2}-1}$.
\end{proof}

\section{When \texorpdfstring{$a$}{a} is small}\label{sec::large-delta}

We make effective use of containers when $t = |N(A)|-|A|$ is large in Lemmas~\ref{lem::weak} and~\ref{lem::strong}. When $t$ is small, we must have that $a$ is small, and we have no need for the containers and can compute the cost of $A$ directly. We first recall a well-known lemma.

\begin{lemma}[\cite{HK2} Proposition 2.6]\label{lem::2-linked cost}
Let $G$ be a graph with maximum degree at most $d$ and let $v$ be a vertex of $G$. Then the number of trees in $G$ rooted at $v$ with $a$ vertices is at most $(ed)^{a-1}$.
\end{lemma}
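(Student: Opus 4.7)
My plan is to compare with counting subtrees of an infinite $d$-ary tree and apply Lagrange inversion to obtain the sharp constant $e$. Fix an arbitrary ordering of the neighbors at each vertex of $G$, and let $T_d$ denote the rooted infinite tree in which every vertex has exactly $d$ children, with the children labeled $1, \ldots, d$. I would define an injection $T \mapsto \tilde{T}$ from rooted $a$-vertex subtrees of $G$ at $v$ to rooted $a$-vertex subtrees of $T_d$ at its root: for each $u \in T$, its unique $v$-to-$u$ path in $T$ is a sequence of neighbor-choices in $G$, which lifts to the analogous sequence of child-choices in $T_d$, picking out a unique vertex $\tilde{u}$. The resulting $\tilde{T} = \{\tilde{u} : u \in T\}$ has exactly $a$ vertices, and the map is injective because $T$ is recovered from $\tilde{T}$ by projecting the child-labels back to neighbors in $G$.

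Next I would count rooted $a$-vertex subtrees of $T_d$ by a standard generating function argument. Let $f(x) = \sum_{a \geq 1} t_a x^a$, where $t_a$ is the number of rooted subtrees of $T_d$ with $a$ vertices. Decomposing such a subtree by whether each of the $d$ children of the root is included (together with some subtree rooted there) yields the functional equation $f(x) = x(1+f(x))^d$, and Lagrange inversion gives
\[ t_a = \frac{1}{a}[y^{a-1}](1+y)^{ad} = \frac{1}{a}\binom{ad}{a-1}. \]

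The final step is elementary: the chain of inequalities $\binom{ad}{a-1} \leq (ad)^{a-1}/(a-1)!$, Stirling's bound $(a-1)! \geq ((a-1)/e)^{a-1}$, and $(1+1/(a-1))^{a-1} \leq e$ combine to give $t_a \leq (ed)^{a-1}$ for all $a \geq 1$, with the degenerate cases $a = 1$ (where $t_1 = 1$) and $a = 2$ (where $t_2 = d \leq ed$) verified directly. The main conceptual point is setting up the injective lift to $T_d$, which captures the intuition that the local tree structure of $G$ is dominated by that of the infinite $d$-ary tree; once this is in place, the generating function identity and asymptotic estimates are routine. I note that the more elementary DFS encoding of rooted subtrees as Dyck paths decorated with neighbor-choices only yields the weaker bound $(4d)^{a-1}$, since the relevant Catalan number grows like $4^{a-1}$; it is the Lagrange inversion step that forces the precise constant $e$.
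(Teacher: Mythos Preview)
The paper does not give a proof of this lemma; it is quoted verbatim from Hamm--Kahn~\cite{HK2} (their Proposition~2.6) as a known tool, so there is no in-paper argument to compare against. Your proof is correct and is in fact one of the standard proofs of this classical bound. The injection into the infinite $d$-ary tree is well-defined (the path from $v$ to any $u\in T$ determines a unique label sequence, hence a unique vertex of $T_d$) and injective (the label sequences reconstruct both the vertex set and the parent function of $T$). The Lagrange inversion step correctly produces the Fuss--Catalan count $t_a=\tfrac{1}{a}\binom{ad}{a-1}$, and your chain
\[
t_a \;\le\; \frac{1}{a}\cdot\frac{(ad)^{a-1}}{(a-1)!}\;\le\;\frac{1}{a}\,(ed)^{a-1}\Bigl(1+\tfrac{1}{a-1}\Bigr)^{a-1}\;\le\;\frac{e}{a}\,(ed)^{a-1}\;\le\;(ed)^{a-1}
\]
is valid for $a\ge 3$, with $a=1,2$ handled directly as you note. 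Your closing remark is also accurate: the cruder Dyck-path encoding loses a factor because it forgets the constraint that a subtree of $T_d$ uses each child slot at most once, whereas Lagrange inversion captures this exactly.
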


We recall the definition of the graph $M$ as the bipartite graph with parts $L_k = \binom{[2k-1]}{k}$ and $L_{k-1} = \binom{[2k-1]}{k-1}$ and edge relation given by containment. We need a stronger version of Lemma~\ref{lem::iso} for small $a$, which follows from the fact that $M$ is $C_4$-free.

\begin{lemma}\label{lem::iso-small-a}
For $A \subseteq L_k$ with $|A| = a \leq k$, we have $|N(A)| \geq ka - \binom{a}{2}$.
\end{lemma}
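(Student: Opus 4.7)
My plan is a short double-counting argument, which in fact gives the bound for every $a$ (the hypothesis $a \leq k$ seems to be included only because the quantity $ka - \binom{a}{2}$ stops being useful, and eventually turns negative, once $a$ grows past $2k-1$). Let $E = \{(u,v) : v \in A,\ u \in N(A),\ u \subset v\}$ be the set of containment pairs. Each $v \in A$ is a $k$-subset of $[2k-1]$, and each of its $k$ subsets of size $k-1$ automatically lies in $L_{k-1}$, so every $v \in A$ contributes exactly $k$ pairs and $|E| = ka$.

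On the $N(A)$ side, for $u \in N(A)$ set $d(u) := |\{v \in A : u \subset v\}| \geq 1$, so $\sum_{u \in N(A)} d(u) = |E| = ka$. Using the elementary inequality $d - 1 \leq \binom{d}{2}$, valid for every integer $d \geq 1$, I get
\[ ka - |N(A)| = \sum_{u \in N(A)} (d(u) - 1) \leq \sum_{u \in N(A)} \binom{d(u)}{2}. \]

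To finish, I would interpret $\sum_{u \in N(A)} \binom{d(u)}{2}$ as the number of triples $(\{v_1,v_2\}, u)$ with $v_1 \neq v_2 \in A$ and $u \subset v_1 \cap v_2$, $|u| = k-1$. Two distinct $k$-sets satisfy $|v_1 \cap v_2| \leq k-1$, so each pair $\{v_1, v_2\}$ admits at most one such $u$ (namely $v_1 \cap v_2$, when the intersection has size exactly $k-1$). Therefore $\sum_{u \in N(A)} \binom{d(u)}{2} \leq \binom{a}{2}$, and rearranging yields the desired $|N(A)| \geq ka - \binom{a}{2}$. There is no real obstacle here; the only alternative I would consider is extracting the bound from the cascade form of Kruskal–Katona, but the double count above is cleaner and essentially one line once the inequality $d-1 \leq \binom{d}{2}$ is noted.
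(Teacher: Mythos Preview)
Your proof is correct. The double-counting argument is clean: $|E|=ka$ on the $A$-side, the inequality $d-1\le\binom{d}{2}$ for integers $d\ge 1$ is trivially true, and the key observation that any two distinct $k$-sets share at most one common $(k-1)$-subset (namely their intersection, when it has size exactly $k-1$) gives $\sum_{u}\binom{d(u)}{2}\le\binom{a}{2}$. Nothing is missing.

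The paper does not actually prove this lemma; it simply remarks that the bound is a special case of the full (cascade form) Kruskal--Katona theorem. Your argument is therefore genuinely different and strictly more elementary: it avoids invoking Kruskal--Katona altogether and works by a direct count of incidences and ``collisions'' between pairs in $A$. The trade-off is that Kruskal--Katona would give the exact minimum of $|N(A)|$ for each $a$, whereas your bound $ka-\binom{a}{2}$ is not tight for $a\ge 3$ (for instance, $a$ sets through a common $(k-1)$-set give $|N(A)|=a(k-1)+1>ka-\binom{a}{2}$). But tightness is irrelevant here: the paper only uses this lemma in Section~\ref{sec::large-delta} to get $|N(A)|\ge(1-o(1))ka$ for $a\le\log_2 k$, and your inequality delivers that immediately. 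Your remark that the hypothesis $a\le k$ is not needed for the argument itself is also correct.
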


Now we can prove Theorem~\ref{thm::2-layers} for small $A$.

\begin{lemma}\label{lem::small a}
Fix a constant $\ell \in \mathbb{Z}^+$. Let $p > 1-2^{-2/\ell}$ be constant, and let $X = V(M)_p$. Then there exists a constant $c>0$ such that with probability at least $1-o(1/k)$, every closed, 2-linked $A \subseteq L_k$ with $\ell \leq |A| \leq \log^8 k$ satisfies $|N(A) \cap X| - |A \cap X| \geq c(|N(A)|-|A|)$. 
\end{lemma}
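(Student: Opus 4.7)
The plan is to enumerate closed $2$-linked $A \subseteq L_k$ of each size $a \in \{2,\ldots,\lfloor\log_2 k\rfloor\}$ directly and union-bound, bypassing the container machinery needed for larger $a$. First I would bound the count: consider the auxiliary graph $H$ on $L_k$ with $u \sim v$ iff $|u \cap v| = k-1$, so $2$-linkedness in $M$ is the same as connectedness in $H$. Each $u \in L_k$ has $k$ neighbors in $L_{k-1}$, each of which is a subset of $k$ elements of $L_k$, so $H$ has maximum degree at most $k(k-1)$. By Lemma~\ref{lem::2-linked cost}, the number of $2$-linked subsets of $L_k$ of size $a$ is then at most $|L_k|(ek^2)^{a-1} \leq 2^{2k} k^{2a}$.

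Next, fix such an $A$ and set $n_A := |N(A)|$, $t := n_A - a$. Lemma~\ref{lem::iso-small-a} gives $n_A \geq ak - \binom{a}{2}$, hence $t \geq a(k-1) - \binom{a}{2}$. Since $|A \cap X| \leq a$, the failure event $|N(A) \cap X| - |A \cap X| < ct$ implies $|N(A) \cap X| \leq j := \lfloor a + ct \rfloor$. For $c$ small, $j$ is far below the mean $p n_A$, so the binomial PMF is increasing on $[0,j]$ and
\[
\Pr\bigl(|N(A) \cap X| \leq j\bigr) \leq (j+1) \binom{n_A}{j} p^j (1-p)^{n_A - j}.
\]
The key refinement is to bound $\binom{n_A}{j} \leq (en_A/j)^j$ together with $n_A/j \leq 2/c$ (which holds because $t \geq a$), yielding the per-$A$ bound $(j+1)(2ep/c)^j (1-p)^{n_A - j}$.

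For the union bound, write $\beta := -\log_2(1-p) = 1 + \delta$ with $\delta > 0$ since $p > 1/2$. Multiplying the count by the per-$A$ probability and taking $\log_2$, the exponent for size $a$ is at most
\[
2k - a(k-1)\beta + (a + ct)\log_2\!\tfrac{2ep}{c} + O(a^2 + a \log k).
\]
For $a \geq 3$ the leading term $-(a\beta - 2)k$ is $-\Omega(k)$ with room to spare. The main obstacle, and the reason the hypothesis $p > 1/2$ is sharp, is the case $a = 2$: the expression collapses to $-2\delta k + ct\log_2(2ep/c) + O(\log k)$, and since $t = \Theta(k)$ the term $ct\log_2(1/c)$ is of the same order $\Theta(k \log(1/c))$ as the savings $2\delta k$. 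Choosing $c$ small enough that $c\log_2(1/c) < \delta$ (which is possible because $c \log(1/c) \to 0$) makes the exponent $-\Omega(k)$. Summing over $2 \leq a \leq \lfloor\log_2 k\rfloor$ gives total failure probability $2^{-\Omega(k)} = o(1/k)$, establishing the lemma. The role of $p > 1/2$ is exactly to provide $\delta > 0$, equivalently $4(1-p)^2 < 1$, so that the $\binom{2k-1}{k}\cdot k^2 \approx 4^k k^2$ closed $2$-linked pairs $\{u,v\}$ with $|u \cap v| = k-1$ are beaten by the probability $\approx (1-p)^{2k-1}$ that their common neighborhood is empty in $X$.
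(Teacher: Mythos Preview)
Your proposal is correct and follows essentially the same approach as the paper: enumerate $2$-linked $A$ of size $a$ via rooted spanning trees in the auxiliary graph $J$ (at most $|L_k|(ek^2)^{a-1}$ of them), bound the probability that $|N(A)\cap X|$ falls far below its mean using Lemma~\ref{lem::iso-small-a}, and union-bound. The paper parameterizes the threshold as $\delta k a$ for an explicit $\delta$ depending on $\ep$ (where $p=1-2^{-(1+\ep)}$) rather than your $a+ct$, but since $t=\Theta(ka)$ these are equivalent; both identify $a=2$ as the tight case forcing $p>1/2$. One small imprecision: in your displayed exponent you dropped the $+ct\beta$ arising from $-(1-c)t\beta = -t\beta + ct\beta$, but this term is $O(ck)$ and is dominated by your $ct\log_2(1/c)$ term, so the conclusion and the choice of $c$ are unaffected.
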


\begin{proof}
Let $J$ be the graph with vertex set $L_k$ and adjacency given by the 2-linked relation: $u$ is adjacent to $v$ if and only if $N(u) \cap N(v) \neq \emptyset$. Note that $J$ is a $k(k-1)$-regular graph. We associate to each 2-linked $A \subseteq L_k$ of size $a$ an arbitrary rooted tree $T$ in $J$ with $V(T) = A$. By Lemma~\ref{lem::2-linked cost} and Proposition~\ref{prop::costfacts}, we have
\begin{equation}\label{eq::small a cost}
\cost(A) \leq \cost(T) \leq \log_2 |L_k| + (a-1)\log_2(ek(k-1)) \leq 2k + 3 a \log_2 k ,
\end{equation}
where the $\log_2 |L_k|$ term comes from the number of choices for the root of $T$.

Let $p = 1-2^{-2(1+\ep)/\ell} > 1-2^{-2/\ell}$, where $\ep>0$ is constant, and let $\delta = \min\{\frac{\ep/2}{1+\ep},(\frac{\ep}{6\ell})^2\}$. By Lemma~\ref{lem::iso-small-a}, the probability that $|N(A) \cap X| \leq \delta ka$ is at most
\[ \sum_{i\leq \delta ka} \binom{ka - a^2/2}{i} p^{i} (1-p)^{ka - \binom{a}{2} - i} \leq \exp_2\left( \delta k a \log_2(4/\delta) - \frac{2}{\ell}(1+\ep)(1-\delta)ka \right) \]
\[ \leq \exp_2\left( 3\sqrt{\delta} ka - \frac{2}{\ell}(1+\ep/2) ka \right) \leq \exp_2 \left( -\frac{2}{\ell}(1+\ep/4) ka \right) .\]
Using~\eqref{eq::small a cost} to take a union bound, we conclude that for all 2-linked $A$ with $|A| = a \geq \ell$ we have $|N(A)| > \delta ka$, with probability at least
\[ 1 - \exp_2\left( 2k + 3a \log_2 k - \frac{2}{\ell} (1+\ep/4)ka \right) \geq 1 - \exp_2\left( -\frac{\ep ka}{3\ell} \right) .\]
Taking a union bound over all $\ell \leq a \leq \log^8 k$, we conclude that
\[ |N(A) \cap X| - |A \cap X| \geq \delta ka - a \geq \frac{\delta}{2} ka \geq \frac{\delta}{2}(|N(A)|-|A|) \]
for all 2-linked $A$ with $\ell \leq a \leq \log^8 k$ with probability $1-o(1/k)$.
\end{proof}

\section{Weak containers}\label{sec::weak}

Recall that we work in the ``middle two layers'' graph $M$, where $A \subseteq L_k$ and $N(A) \subseteq L_{k-1}$. We let $G = N(A)$, $a = |A|$, and $t = |N(A)|-|A| \geq \max\{a/2k, k\log^7 k\}$.

\begin{proof}[Proof of Lemma~\ref{lem::weak}]
We define our weak containers in several steps, conveniently summarized in Figure~\ref{fig::weak}. We show the existence and other nice properties of the intermediary sets by claims throughout the proof. In between the claims, we bound the cost of the intermediary sets and show that they are concentrated.

Let $G^h$ be the set of vertices $v \in L_{k-1}$ for which there are at least $k^3/(2\log^4 k)$ walks $vxyz$ where $x,z \in A$. Note that $G^h \subseteq G$. For $R \subseteq A$, define $T = N(N(N(R)))$, $F' = N(N(N(R)) \cap A)$, and $L = E(N(R), \bar A)$,\footnote{The choice of notation is meant to be a helpful reminder of the definition: $R$ will be a \emph{random} subset of $A$, $T$ is the \emph{triple} neighborhood of $R$, and $L$ is the set of edges which \emph{leak} from $N(R)$ to $\bar A$. The letters $G$, $S$ and $F$ are historical convention (see e.g.~\cite{Galvin}).} where $\bar{A} = L_k \setminus A$ is the complement of $A$.

\begin{claim}
For every $A$, there exists a choice of $R$ such that\footnote{Note that $R$ may be empty, but the following arguments still go through.}
\begin{equation}\label{eq::R}
|R| \leq 18 a \frac{\log^5 k}{k^3} ,
\end{equation}
\begin{equation}\label{eq::L}
|L| \leq 18 t \frac{\log^5 k}{k} ,
\end{equation}
\begin{equation}\label{eq::Gh-F'}
|G^h \setminus F'| \leq 9\frac{t}{k^{1/5}} .
\end{equation} 
\end{claim}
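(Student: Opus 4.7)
The plan is to prove the claim by the probabilistic method: sample $R$ by including each element of $A$ independently with a well-chosen probability $q = \Theta(\log^5 k / k^3)$, bound the expectation of each of $|R|$, $|L|$, and $|G^h \setminus F'|$, and conclude via Markov's inequality and a union bound that some deterministic $R$ satisfies all three bounds in \eqref{eq::R}--\eqref{eq::Gh-F'} simultaneously. A convenient choice is $q = 6 \log^5 k / k^3$, which makes $\mathbb{E}[|R|]$ exactly one-third of the target in \eqref{eq::R}.

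Inequality \eqref{eq::R} is then immediate from $\mathbb{E}[|R|] = qa$ and Markov. For \eqref{eq::L}, I would count the edges between $N(A)$ and $\bar A$ in $M$: each $u \in N(A)$ has degree $k$ with $d_A(u)$ of its neighbors in $A$, so $\sum_{u \in N(A)}(k - d_A(u)) = k|N(A)| - ka = kt$. A given such edge $(u,v)$ with $v \in \bar A$ lies in $L$ iff $u \in N(R)$, and a union bound over the neighbors of $u$ in $A$ gives $\Pr[u \in N(R)] \leq q \cdot d_A(u) \leq qk$. Hence $\mathbb{E}[|L|] \leq qk \cdot kt = 6 t \log^5 k / k$, which Markov converts into \eqref{eq::L}.

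The main obstacle is \eqref{eq::Gh-F'}, and it is the only place the definition of $G^h$ is used. Unpacking, $v \in F'$ iff some walk $v \to x \to y \to z$ has $x, z \in A$ and $z \in R$, so for $v \in G^h$ the number of walks $vxyz$ with $x, z \in A$ is at least $k^3 / (2 \log^4 k)$ by definition. To convert this into a lower bound on the number of distinct $z \in A$ reachable from $v$, I would establish a short combinatorial lemma: for fixed $v \in L_{k-1}$ and $z \in L_k$, the number of walks $vxyz$ in $M$ is at most $2k - 1$, by a case analysis on $|v \cap z|$ (nonzero only when $v \subsetneq z$, giving $2k - 1$ walks, or when $|v \cap z| = k - 2$, giving $2$ walks). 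Dividing, each $v \in G^h$ reaches at least $k^2 / (5 \log^4 k)$ distinct $z \in A$, so the probability that none of these $z$ is sampled into $R$ is at most $(1-q)^{k^2/(5 \log^4 k)} \leq k^{-6/5}$. Combined with $|G^h| \leq |N(A)| = a + t \leq 3kt$, using $a \leq 2kt$ from the assumption $t \geq a/2k$, this yields $\mathbb{E}[|G^h \setminus F'|] \leq 3t / k^{1/5}$, and Markov gives \eqref{eq::Gh-F'}.

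Each of the three bad events thus has probability at most $1/3$, and a union bound produces the desired deterministic $R$ (after shrinking $6$ by an arbitrarily small amount if strict inequality is needed). The key tension in the argument is the calibration of $q$: it must be large enough that every high-walk vertex $v \in G^h$ almost surely reaches $R$ --- which, after paying the $2k - 1$ walk-collision cost in the combinatorial lemma, forces $q$ to exceed roughly $\log k / k^2$ --- yet small enough that $|R|$ and the leakage $|L|$ remain under control. The polylogarithmic cushion in $q = 6 \log^5 k / k^3$ is precisely what makes both sides of this balance work.
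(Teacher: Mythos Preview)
Your argument for \eqref{eq::R} and \eqref{eq::L} matches the paper exactly, but the computation for \eqref{eq::Gh-F'} contains a genuine error. With $q = 6\log^5 k/k^3$ and only $k^2/(5\log^4 k)$ distinct endpoints $z$, the probability that none lies in $R$ is at most
\[
\exp\!\left(-q\cdot\frac{k^2}{5\log^4 k}\right)=\exp\!\left(-\frac{6\log k}{5k}\right),
\]
which is $1-o(1)$, not $k^{-6/5}$. You are off by a factor of $k$ in the exponent; indeed, your own final paragraph notes that the walk-collision bound $2k-1$ would force $q\gtrsim \log k/k^2$, yet $6\log^5 k/k^3$ is far smaller than this.

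The fix is already implicit in your case analysis: the $2k-1$ bound is attained only when $v\subsetneq z$, and there are at most $k$ such $z$, contributing at most $k(2k-1)=O(k^2)$ walks in total. All remaining walks go to $z$ with $|v\cap z|=k-2$, where your own lemma gives exactly $2$ walks per $z$. Subtracting the $O(k^2)$ distance-$1$ walks from the $k^3/(2\log^4 k)$ total and dividing by $2$ yields at least $k^3/(5\log^4 k)$ distinct $z\in A$, and now $\exp(-q\cdot k^3/(5\log^4 k))=k^{-6/5}$ as required. This is precisely the paper's argument: it separates the distance-$1$ and distance-$3$ endpoints rather than taking a uniform worst case, and that extra factor of $k$ in the endpoint count is exactly what makes the chosen $q$ work.
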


\begin{proof}
Let $R = A_q$ with $q = \frac{6\log^5 k}{k^3}$, that is, we choose each element of $A$ to be included in $R$ independently with probability $q$. We first find upper bounds on the expected sizes of $R$, $L$, and $G^h \setminus F'$, and then by Markov's inequality, conclude the existence of an $R$ that yields such upper bounds within a constant factor.

Clearly $\mathbb{E}|R| = aq$. Recalling that $t = |G| - |A|$, we have
\begin{equation}\label{eq::egabar}
|E(G,\bar{A})| = tk .
\end{equation}
For each edge $e \in E(G,\bar{A})$, the probability that $e \in L$ is the probability that the endpoint of $e$ in $G$ has a neighbor which is in $R$, which is at most $kq$. Thus $\mathbb{E}|L| \leq tk \cdot kq \leq 6 t \log^5 k/k$. Finally we bound $\mathbb{E}|G^h \setminus F'|$. For $v \in G^h$, consider all the vertices $z$ which are endpoints of a walk $vxyz$ with $x,z \in A$. If the distance between $v$ and $z$ is $1$, then all such walks have $x=z$ or $v=y$, since every two vertices of $L_k$ share at most one neighbor in $L_{k-1}$, and hence there are at most $2k^2$ such walks over all choices of $z$. Since $v \in G^h$, there are at least $\frac{k^3}{2\log^4 k} - 2k^2$ \emph{paths} $vxyz$ with $x,z \in A$, that is, walks $vxyz$ with all vertices distinct. This gives at least $\frac{1}{2} \big(\frac{k^3}{2\log^4 k} - 2k^2 \big) \geq \frac{k^3}{5 \log^4 k}$ vertices $z \in A$ which are the endpoints of such paths, because between any two vertices at distance $3$ from each other in $M$ there are exactly two paths of length $3$. Thus
\[ \mathbb{E}|G^h \setminus F'| \leq |G| (1-q)^{k^3/5\log^4 k} \leq (a+t) \exp\left( - q \frac{k^3}{5\log^4 k} \right) = \frac{a+t}{k^{6/5}} \leq 3t/k^{1/5} ,\]
where in the last step we used the assumption that $t \geq a/2k$.

By Markov's inequality, there exists a choice of $R$ as desired.
\end{proof}

Using~\eqref{eq::R} and simply the fact that $R \subseteq L_k$, we compute the cost of $R$ as
\[ \cost(R) \leq \log_2 \binom{\binom{2k-1}{k}}{\leq O(a\log^5 k / k^3)} \leq O\left( \frac{a \log^5 k}{k^3} \log\frac{e\binom{2k-1}{k}}{O(a\log^5 k/k^3)} \right) \]
\begin{equation}\label{eq::costR}
\leq O\left( a \frac{\log^5 k}{k^3} \left( \log\frac{\binom{2k-1}{k}}{a} + 3\log k\right) \right) \leq O\left( t \frac{\log^6 k}{k^2} \right) ,
\end{equation}
where the last inequality follows by Lemma~\ref{lem::iso} and $t \geq a/2k$.

Also from~\eqref{eq::R}, we trivially have
\begin{equation}\label{eq::T}
|T| \leq |R| \cdot k^3 \leq O(a \log^5 k) .
\end{equation}
Since $T$ is determined by $R$, we have $\cost(T) \leq \cost(R)$, so we may conclude that $T$ is concentrated by Proposition~\ref{prop::Chern} after checking that $t^2/|T|$ is both $\omega(\cost(R))$ and $\omega(k)$. Using~\eqref{eq::costR}, \eqref{eq::T}, and $t \geq a/2k$, we have
\[ \frac{t^2}{|T|} \geq \Omega\left( \frac{t^2}{a \log^5 k} \right) \geq \Omega\left( \frac{t}{k \log^5 k} \right) \geq \omega\left( t \frac{\log^6 k}{k^2} \right) \geq \omega(\cost(R)) .\]
Notice also that if $t \geq k^3$, we have $t^2/|T| = \omega(k)$. Otherwise, we have $a \leq 2kt \leq k^4$, so by~\eqref{eq::T}, Lemma~\ref{lem::iso}, and our assumption that $t \geq k\log^7 k$ we have
\[ \frac{t^2}{|T|} \geq \Omega\left( \frac{t^2}{a \log^5 k} \right) \geq \Omega\left( \frac{t}{k \log^5 k} \log\frac{\binom{2k-1}{k}}{a} \right) \geq \Omega\left( \frac{t}{\log^5 k} \right) \geq \omega(k) .\]

Since $L$ is a set of edges incident to $N(R)$, we have by~\eqref{eq::R}, \eqref{eq::L}, and $t \geq a/2k$ that
\[ \cost(L|R) \leq \log_2 \binom{k\cdot |N(R)|}{\leq O\left(t\log^5k/k\right)} \leq O\left( t \frac{\log^5 k}{k} \log\frac{k^2 \cdot |R|}{t \log^5 k / k} \right) = O\left( t \frac{\log^6 k}{k} \right) .\]
Because $F'$ is determined by $R$ and $L$, we have by Proposition~\ref{prop::costfacts} that
\begin{equation}\label{eq::costF'}
\cost(F') \leq \cost(R) + \max_R \cost(L|R) = O\left( t \frac{\log^6 k}{k} \right) .
\end{equation}
By the definitions of $T$, $L$, and $F'$, and by~\eqref{eq::L}, we have
\begin{equation}\label{eq::T-F'}
|T \setminus F'| \leq |L| \cdot k = O(t\log^5 k) ,
\end{equation}
so $T \setminus F'$ is concentrated by Proposition~\ref{prop::Chern}, because
\[ \frac{t^2}{|T \setminus F'|} \geq \Omega\left( \frac{t}{\log^5 k} \right) \geq \max\left\{\omega\left( \cost(T,F') \right), \omega(k)\right\} ,\]
using our assumption that $t \geq k \log^7 k$.
By Proposition~\ref{prop::unionconc}, together with the fact that $T$ is concentrated, we have that $F'$ is concentrated.

For a vertex $v$ and a set of vertices $Y$, we define $d(v,Y) = |N(v) \cap Y|$ to be the \emph{degree of $v$ to $Y$}.
Let $Q = \{v : d(v,T) \geq k/2\}$, so
\begin{equation}\label{eq::Q}
|Q| \leq \frac{k|T|}{k/2} = 2|T| \leq O(a \log^5 k) 
\end{equation}
by~\eqref{eq::T}. Since $T$ determines $Q$, we have that $Q$ is concentrated by Proposition~\ref{prop::Chern}, in the same way that we showed that $T$ is concentrated. Let $D = \{v \in Q \setminus A : d(v,T\setminus G) \geq k/4\}$.

\begin{claim}
For every $A$, there exists a choice of $R' \subseteq T \setminus G$ such that
\begin{equation}\label{eq::R'}
|R'| \leq O(t \log^6 k / k) ,
\end{equation}
\begin{equation}\label{eq::D-N(R')}
|D \setminus N(R')| \leq O(t) .
\end{equation}
\end{claim}

\begin{proof}
Let $R' = (T \setminus G)_{q'}$, where $q' = 5 \frac{\log k}{k}$. By~\eqref{eq::T-F'} and the fact that $F' \subseteq G$, we have $\mathbb{E}|R'| \leq O(t \log^6 k / k)$. Then using $|D| \leq |Q| \leq O(a \log^5 k)$ from~\eqref{eq::Q}, we have
\[ \mathbb{E}|D \setminus N(R')| \leq |D|(1-q')^{k/4} \leq O(a\log^5 k) \cdot \exp\left( -q' \frac{k}{4} \right) = O\left( a \frac{\log^5 k}{k^{5/4}} \right) \leq O(t) .\]
By Markov's inequality, there exists a choice of $R'$ as desired.
\end{proof}

Let $Q' = Q \setminus N(R')$. Since $R' \subseteq T$, we have by~\eqref{eq::R'} and $t \geq a/2k$ that
\begin{equation}\label{eq::costR'}
\cost(R'|T) \leq \log_2 \binom{|T|}{\leq O(t \log^6 k / k)} \leq O\left( t \frac{\log^6 k}{k} \log\frac{a \log^5 k}{t \log^6 k / k} \right) \leq O\left( t \frac{\log^7 k}{k} \right) .
\end{equation}
Since $\cost(R',Q) = \cost(R'|T) + \cost(R)$ and $t \geq k\log^7 k$, we have by~\eqref{eq::R'},
\[ \frac{t^2}{|Q \cap N(R')|} \geq \frac{t^2}{|N(R')|} \geq \Omega\left( \frac{t}{\log^6 k} \right) \geq \max\left\{\omega(\cost(R',Q)), \omega(k)\right\} .\]
Thus $Q \cap N(R')$ is concentrated, and hence by Proposition~\ref{prop::unionconc}, $Q'$ is concentrated.

Finally, let $S' = Q' \cup A$. In the language of Lemma~\ref{lem::weak}, $\mathcal{W}_t'$ is the set of all $(S',F')$ pairs produced in the above fashion. By definition, the first two bullets of Lemma~\ref{lem::weak} are satisfied. We have also already shown the last bullet, namely that $F'$ is concentrated. It remains to be shown that the third and fourth bullets hold --- that is, that $|S'| \leq |F'| + O(t)$ and $S'$ is concentrated --- and that the total cost of $S'$ and $F'$ is $o(t)$.

Since $R' \cap G = \emptyset$, we have that $A \setminus Q' = A \setminus Q$. Thus to show that $S'$ is concentrated, it suffices to show that $|A \setminus Q| = o(t)$ by Propositions~\ref{prop::autoconc} and~\ref{prop::unionconc}. The following claim proves something slightly stronger that is needed for later and also proves that $(S',F')$ is a weak container.

\begin{figure}
\centering
\begin{tikzpicture}
\begin{scope}[shift={(0,0)}]
	\node[draw] (R) at (0,0) {$R = A_q$};
	\node (T) at (-3,-1) {$T = N(N(N(R)))$};
	\node[draw] (L) at (6,-2) {$L = E(N(R),\bar{A})$};
	\node (F') at (4,-4) {$F' = N(N(N(R)) \cap A)$};
	\node (Q) at (-6,-2.5) {$Q = \{v : d(v,T) \geq k/2\}$};
	\node[draw] (R') at (-1,-2.5) {$R' = (T \setminus G)_{q'}$};
	\node (Q') at (-3,-4) {$Q' = Q \setminus N(R')$};
	\node[draw] (S') at (-3,-5.5) {$S' = Q' \cup A$};

	\draw (R) -- (T);
	\draw (R) -- (L);
	\draw (L) -- (F');
	\draw (R) -- (F');
	\draw (T) -- (Q);
	\draw (T) -- (R');
	\draw (Q) -- (Q');
	\draw (R') -- (Q');
	\draw (Q') -- (S');
\end{scope}
\end{tikzpicture}
\caption{The intermediary sets used to construct weak containers. Each set can be defined by what is immediately above it in the diagram, except for the boxed sets, which also require $A$ or $G = N(A)$ for their definition. We compute the cost of an intermediary set $Y$ by computing $\cost(Y|Z)$, where $Z$ is all the sets above $Y$, and adding it to $\cost(Z)$. The cost of the boxed sets is kept low because the size of the set that depends on $A$ or $G$ is appropriately small. The cost of the other sets is at most the cost of the sets above them.}
\label{fig::weak}
\end{figure}
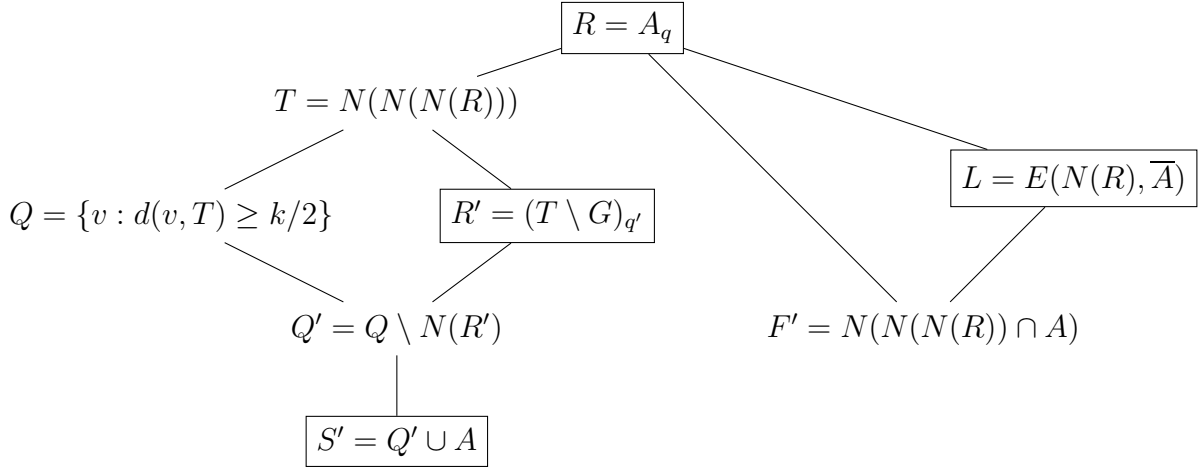

\begin{claim}\label{claim::A-Q}
For every $A$, we have that $|S' \setminus A| = O(t)$, $|G \setminus F'| = O(t)$, and $|A \setminus Q| = o(t/\log k)$.
\end{claim}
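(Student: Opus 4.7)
The plan is to prove the three bounds one at a time, leveraging the hypotheses (eq::R), (eq::L), (eq::Gh-F'), (eq::R'), and (eq::E-N(R')) that were built into the random choices of $R$ and $R'$.

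First I would handle $|S' \setminus A| = O(t)$, which is mostly bookkeeping. Since $R' \subseteq T \setminus G$, no vertex of $R'$ has a neighbor in $A$, so $A \cap N(R') = \emptyset$ and therefore $S' \setminus A = Q' \setminus A = (Q \setminus A) \setminus N(R')$. Split $Q \setminus A = E \cup ((Q \setminus A) \setminus E)$. The first piece contributes $|E \setminus N(R')| = O(t)$ by (eq::E-N(R')). For $v \in (Q \setminus A) \setminus E$, the definitions force $d(v,T) \geq k/2$ and $d(v,T\setminus G) < k/4$, hence $d(v,T\cap G) \geq k/4$ and so $d(v,G) \geq k/4$. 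Edge-counting $|E(L_k \setminus A, G)| = |G|k - |E(A,G)| = (a+t)k - ak = tk$ then bounds the number of such $v$ by $4t$. Summing gives $|S' \setminus A| = O(t)$.

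Next I would address $|A \setminus Q| = o(t/\log k)$. The key structural observation is that if $v \in A \cap N(N(R))$, witnessed by a length-2 walk $v, w, r$ with $r \in R$, then for every $y \in N(v)$ the walk $y, v, w, r$ witnesses $y \in N(N(N(R))) = T$. Hence $d(v,T) = k$ and $v \in Q$. Consequently $A \setminus Q \subseteq A \setminus N(N(R))$. To promote the containment into the $o(t/\log k)$ bound I would revisit the random selection of $R = A_q$: for each $v \in A$, $\mathbb{P}[v \notin N(N(R))] = (1-q)^{|N_J[v]\cap A|}$, where $J$ is the 2-linking graph on $L_k$. Adding ``$|A \setminus N(N(R))| = o(t/\log k)$'' as a fourth condition in the Markov-existence argument for $R$ then succeeds provided the typical closed $J$-neighborhood $|N_J[v]\cap A|$ is sufficiently large; exceptional $v$ with small $|N_J[v]\cap A|$ are controlled separately using the 2-linkedness of $A$ together with the vertex-isoperimetric constraint $t \geq a/(2k)$.

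Finally, for $|G \setminus F'| = O(t)$ I would decompose $G \setminus F' = (G^h \setminus F') \cup ((G \setminus G^h) \setminus F')$. The first piece is $O(t/k^{1/5}) = O(t)$ by (eq::Gh-F'). For the second, I would bound $|G \setminus G^h|$ directly using the walk-count identity
\[ \#\{vxyz : x,z \in A\} = \sum_{x \in N(v) \cap A} \bigl(k + d_J(x,A)\bigr), \]
which in particular shows that ``light'' vertices have few walks of length 3 returning to $A$. Combining this with the global walk-count $\sum_v \#\{vxyz : x,z \in A\} = k\sum_{y \in G} d(y,A)^2$, the edge-boundary identity $|E(G,\bar A)| = tk$, and a two-sided accounting that stratifies light vertices by $d(v,A)$, one obtains the bound $|G \setminus G^h| = O(t)$.

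The main obstacle is this last bound: naive edge-counting alone produces $O(t \cdot \mathrm{polylog}(k))$ rather than $O(t)$, and removing the polylogarithmic slack requires carefully exploiting the threshold $k^3/(2\log^4 k)$ in the definition of $G^h$ together with the 2-linked structure of $A$ (equivalently, the connectedness of $A$ in $J$). The analogous technical challenge appears in justifying that adding $|A \setminus N(N(R))| = o(t/\log k)$ to the Markov existence step is feasible, which is why the threshold $q = 6\log^5 k/k^3$ was chosen as large as it is.
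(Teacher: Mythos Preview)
Your treatment of $|S'\setminus A|=O(t)$ is fine and matches the paper. Your outline for $|G\setminus F'|=O(t)$ is in the right spirit---the paper also reduces to $|G\setminus G^h|=O(t)$---but the paper does this via a concrete two-step refinement
\[
G^\ell=\{v\in G:d(v,A)<k/\log^2 k\},\quad A^\ell=\{v\in A:d(v,G^\ell)>k/2\},\quad G^{\ell\ell}=\{v\in G:d(v,A\setminus A^\ell)<k/\log^2 k\},
\]
showing $G\setminus G^h\subseteq G^{\ell\ell}$ and then bounding $|G^{\ell\ell}|$ by edge-counting. Your ``stratification by $d(v,A)$'' hints at this but does not supply the key second pass through $A^\ell$, without which one only gets $|G\setminus G^h|=O(t\,\mathrm{polylog}\,k)$, as you yourself suspect.

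The real problem is your argument for $|A\setminus Q|=o(t/\log k)$. The containment $A\setminus Q\subseteq A\setminus N(N(R))$ is correct, but the right-hand side is essentially all of $A$ and cannot be made small by a Markov argument. Indeed, $J$ is $k(k-1)$-regular, so $|N_J[v]\cap A|\le k(k-1)+1<k^2$ for every $v$, and with $q=6\log^5 k/k^3$ one has
\[
(1-q)^{|N_J[v]\cap A|}\ \ge\ (1-q)^{k^2}\ =\ 1-O\!\left(\tfrac{\log^5 k}{k}\right)
\]
for \emph{every} $v\in A$; equivalently, $|N(N(R))|\le k^2|R|=O(a\log^5 k/k)$, so $|A\setminus N(N(R))|=(1-o(1))a$, which is not $o(t/\log k)$ when $t$ is as small as $a/(2k)$. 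No choice of ``exceptional'' vertices or appeal to 2-linkedness rescues this: the bound you need is off by a full factor of $k$, because $Q$ is defined through the \emph{third} neighborhood $T=N(N(N(R)))$ while your containment only exploits the second.

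The paper instead argues directly from the definition of $Q$: for $v\in A\setminus Q$ one has $d(v,G\setminus T)>k/2$, and splits these neighbors into $G^h\setminus T\subseteq G^h\setminus F'$ (controlled by \eqref{eq::Gh-F'}, giving an $O(t/k^{1/5})$ contribution) and $G\setminus G^h$ (contained in $G^{\ell\ell}$). A vertex with $\ge k/4$ neighbors in $G^{\ell\ell}$ lies in the auxiliary set $A^{\ell\ell}$, which has size $O(t/\log^2 k)$. Summing yields $|A\setminus Q|=o(t/\log k)$. So the bound on $|A\setminus Q|$ actually \emph{reuses} the machinery you set up for $|G\setminus G^h|$; it is not an independent Markov step on $R$.
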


\begin{proof}
Each vertex of $(Q \setminus D) \setminus A$ sends at least $k/4$ edges to $G$, so by~\eqref{eq::egabar}, we have $|(Q \setminus D) \setminus A| = O(t)$. Combining with~\eqref{eq::D-N(R')}, we have
\[ |S' \setminus A| = |Q' \setminus A| = |(Q \setminus N(R')) \setminus A| \leq |(Q \setminus D) \setminus A| + |(D \setminus N(R')) \setminus A| = O(t) .\]

To show that $|G \setminus F'| = O(t)$, by~\eqref{eq::Gh-F'}, it suffices to show that $|G \setminus G^h| = O(t)$. Define the following sets:
\[ G^\ell = \{ v \in G : d(v,A) < k/\log^2 k \}, \quad \quad A^\ell = \{ v \in A : d(v,G^\ell) > k/2 \} ,\]
\[ G^{\ell\ell} = \{ v \in G : d(v,A\setminus A^\ell) < k/\log^2 k \}, \quad \quad A^{\ell\ell} = \{ v \in A : d(v,G^{\ell\ell}) \geq k/4 \} .\]
Each vertex of $G^\ell$ sends at least $(1-o(1))k$ edges to $\bar{A}$, so by~\eqref{eq::egabar},
\[ |G^\ell| \leq \frac{tk}{(1-o(1))k} = O(t) .\]
Similarly, we have
\[ |A^\ell| \leq \frac{|G^\ell|k/\log^2 k}{k/2} = O\left(\frac{t}{\log^2 k}\right) ,\]
\[ |G^{\ell\ell}| \leq \frac{tk+|A^\ell|k}{(1-o(1))k} = O(t) ,\]
\begin{equation}\label{eq::All}
|A^{\ell\ell}| \leq |A^\ell| + \frac{|G^{\ell\ell}|k/\log^2 k}{k/4} = O\left(\frac{t}{\log^2 k}\right) .
\end{equation}
If $v \in G \setminus G^{\ell\ell}$, then the number of $vxyz$ walks with $x,z \in A$ is at least $\frac{k}{\log^2 k} \cdot \frac{k}{2} \cdot \frac{k}{\log^2 k} = \frac{k^3}{2\log^4 k}$, so $v \in G^h$. This means $G \setminus G^h \subseteq G^{\ell\ell}$, giving $|G \setminus G^h| = O(t)$ as desired.

Note that if $v \in A \setminus Q$ has at least $k/4$ neighbors in $G \setminus G^h \subseteq G^{\ell\ell}$, then $v \in A^{\ell\ell}$. Otherwise, by the definition of $Q$, $v$ has at least $k/4$ neighbors in $G^h \setminus T \subseteq G^h \setminus F'$. Thus by~\eqref{eq::Gh-F'} and~\eqref{eq::All}, we have
\[ |A \setminus Q| \leq |A^{\ell\ell}| + \frac{|G^h \setminus F'|k}{k/4} = o(t/\log k) . \qedhere\]
\end{proof}

With Claim~\ref{claim::A-Q}, we have thus far established that the $(S',F')$ pairs are concentrated weak containers. Recall that the cost of $F'$ is $o(t)$ by~\eqref{eq::costF'}. Using Figure~\ref{fig::weak} as a guide and recalling the cost of $R$ from~\eqref{eq::costR} and the cost of $R'$ from~\eqref{eq::costR'}, we have
\[ \cost(S') \leq \cost(A \setminus Q' | Q') + \cost(R' | R) + \cost(R) \leq \cost(A \setminus Q' | Q') + o(t) .\]
All that is left to do is show that $\cost(A \setminus Q' | Q') = o(t)$. This is the only time we use that $A$ is 2-linked. Let $J$ be the graph with vertex set $L_k$ and adjacency given by the 2-linked relation: $u$ is adjacent to $v$ if and only if $N(u) \cap N(v) \neq \emptyset$. Note that $J$ is $k(k-1)$-regular, and $A$ induces a connected subgraph of $J$. Assuming $A \cap Q' \neq \emptyset$, we associate to $A$ a rooted forest in $J$, rooted in $A \cap Q'$, whose non-roots are exactly the vertices of $A \setminus Q'$. The number of choices for $A \setminus Q'$, given $Q'$, is at most the number of choices for such forests, given $Q'$. Assuming there are $r$ roots, there are at most $\binom{|Q'|}{r}$ choices for roots, at most $\binom{|A \setminus Q'|}{r}$ ways to assign sizes of the trees in the forest to the roots, and at most $(ek(k-1))^{|A \setminus Q'|}$ ways to grow the trees by Lemma~\ref{lem::2-linked cost}. By Claim~\ref{claim::A-Q}, we have $r \leq |A \setminus Q'| = o(t/\log k)$. Given that there are $r$ roots, we have
\[ \cost(A \setminus Q' | Q', r) \leq \log_2\binom{|Q'|}{r} + \log_2 \binom{|A \setminus Q'|}{r} + |A\setminus Q'| \log_2(ek(k-1)) \]
\[ \leq r \log_2\frac{e|Q'|}{r} + |A \setminus Q'| + 3|A \setminus Q'| \log_2 k \leq |A \setminus Q'| \log_2\frac{e|Q'|}{|A \setminus Q'|} + 4|A \setminus Q'| \log_2 k .\]
The last term is always $o(t)$ since $|A \setminus Q'| = o(t/\log k)$. The second-to-last term is clearly $o(t)$ if $\frac{e|Q'|}{|A \setminus Q'|} \leq k^3$, say, while otherwise we have that $|A \setminus Q'| \leq \frac{e|Q'|}{k^3} = O(a\log^5 k / k^3)$ by~\eqref{eq::Q} and $Q' \subseteq Q$, and hence using $t \geq a/2k$ we get
\[ |A \setminus Q'| \log_2\frac{e|Q'|}{|A \setminus Q'|} \leq O\left( a \frac{\log^5 k}{k^3} \right) \cdot \log_2 (e|L_k|) = O\left( t \frac{\log^5 k}{k} \right) = o(t) .\]
Adding in the $\log|A \setminus Q'|$ cost of choosing $r$, the number of roots, we have that $\cost(A \setminus Q' | Q') = o(t)$ if $A \cap Q' \neq \emptyset$.

If $A \cap Q' = \emptyset$, then we associate to each $A$ an arbitrary rooted spanning tree in $J$ whose vertex set is $A$. The arbitrary root comes with an additional cost of $\log_2 |L_k| = O(k) = o(t)$ over the case when $A \cap Q' \neq \emptyset$, since $t \geq k \log^7 k$.
\end{proof}

\section{Strong Containers}\label{sec::strong}

Recall that we work in the ``middle two layers'' graph $M$, where $A \subseteq L_k$ and $N(A) \subseteq L_{k-1}$. Recall that $G = N(A)$, $a = |A|$, and $t = |N(A)|-|A| \geq \max\left\{a/2k, k\log^7 k\right\}$.

\begin{proof}[Proof of Lemma~\ref{lem::strong}]
Suppose we have concentrated weak containers $(S', F')$ at cost $o(t)$, as guaranteed by Lemma~\ref{lem::weak}. We transform these $(S',F')$ into concentrated strong containers $(S,F)$ at an additional cost of $o(t)$. See Figure~\ref{fig::strong} for a convenient description of the dependencies of the intermediary sets. Let $\psi = \log^2 k$.

Recall that, for a vertex $v$ and a set of vertices $Y$, we define $d(v,Y) = |N(v) \cap Y|$ to be the \emph{degree of $v$ to $Y$}. Let $H$ be a minimum set of vertices of $A$ such that $F'' := F' \cup N(H)$ satisfies that for every $v \in A$, $d(v,G \setminus F'') \leq \psi$.

\begin{claim}\label{claim::H}
For every $A$, we have $|H| \leq O(t/\psi)$.
\end{claim}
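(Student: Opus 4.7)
The plan is to exhibit an explicit $H$ of size $O(t/\psi)$ via a greedy construction; since the claim takes $H$ to be of minimum size among sets yielding the stated degree condition, any valid $H$ gives an upper bound on $|H|$.

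Specifically, I would build a candidate $H$ iteratively. Start with $H := \emptyset$ and $F'' := F'$; while there exists some $v \in A$ with $d(v, G \setminus F'') > \psi$, add such a $v$ to $H$ and update $F'' := F' \cup N(H)$. When the loop terminates, the resulting $H$ satisfies the required condition ($d(v, G \setminus F'') \leq \psi$ for all $v \in A$), so it is a valid competitor to the minimum $H$ in the claim.

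To bound the number of iterations, observe that at each step the newly added vertex $v$ contributes at least $\psi + 1$ vertices to $F''$ that were not previously in it. These new vertices all lie in $G \setminus F'$: they lie in $G$ because $N(v) \subseteq N(A) = G$ (as $v \in A$), and they are not in $F'$ because they are additions to a set $F''$ that already contained $F'$. Hence after $j$ iterations, $|F'' \setminus F'| > j\psi$. By Claim~\ref{claim::A-Q}, $|G \setminus F'| = O(t)$, so the process must terminate within $O(t/\psi)$ steps. This yields $|H| \leq O(t/\psi)$ as claimed.

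I do not expect any genuine obstacle here: this is a standard greedy set-cover argument, and the only nontrivial ingredient is the bound $|G \setminus F'| = O(t)$ from the weak container property, which was already established when constructing the weak containers. The choice of threshold $\psi = \log^2 k$ will matter downstream (to control the cost of encoding $H$ and to keep later leakage terms small), but it plays no role in the present claim beyond appearing as the per-step progress measure.
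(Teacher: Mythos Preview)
Your argument is correct and essentially identical to the paper's: both build $H$ greedily by repeatedly adding a vertex $v\in A$ with $d(v,G\setminus F'')>\psi$, observe that each addition removes at least $\psi$ vertices from $G\setminus F''$, and conclude $|H|\le |G\setminus F'|/\psi = O(t/\psi)$ from the weak container property. The only cosmetic difference is that the paper invokes the weak container hypothesis directly rather than citing Claim~\ref{claim::A-Q}.
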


\begin{proof}
It suffices to construct some such $H$ which has size $O(t/\psi)$. We construct $H$ greedily, considering vertices $v \in A$ one at a time and placing them in $H$ if $d(v,G \setminus (F' \cup N(H))) > \psi$. Upon the addition of $v$ to $H$, we reduce the size of $G \setminus (F' \cup N(H))$ by at least $\psi$. Thus $|H| \leq \frac{|G \setminus F'|}{\psi}$, which is $O(t/\psi)$ since $(S',F')$ is a weak container.
\end{proof}

As $H \subseteq S'$, we have by Claim~\ref{claim::H} and $t \geq a/2k$ that
\[ \cost(H|S') \leq \log_2\binom{|S'|}{\leq O(t/\psi)} \leq O\left( \frac{t}{\psi} \log\frac{tk}{t/\psi} \right) \leq O\left(t \frac{\log k}{\psi}\right) \leq o(t) .\]
Since $F'' \subseteq G$, we have $|N(H) \setminus F'| = O(t)$, and hence
\[ \frac{t^2}{|N(H) \setminus F'|} \geq \Omega(t) \geq \max\left\{\omega(\cost(H|S') + \cost(S',F')), \omega(k)\right\} ,\]
recalling that $t \geq k \log^7 k$. Thus $N(H) \setminus F'$ is concentrated by Proposition~\ref{prop::Chern}. Since $F'$ is concentrated, by Proposition~\ref{prop::unionconc}, we have that $F''$ is concentrated.

Let $S'' = \{v : d(v,F'') \geq k-\psi\}$. From the choice of $H$, we have that $A \subseteq S''$. Furthermore, by~\eqref{eq::egabar},
\begin{equation}\label{eq::S''-A}
|S'' \setminus A| \leq \frac{tk}{k-\psi} = O(t) .
\end{equation}
Since $|S' \setminus A| = O(t)$, we have $|S' \triangle S''| = O(t)$. The costs of $S'$ and $S''$ are both $o(t)$, so by Proposition~\ref{prop::Chern}, both $S' \setminus S''$ and $S'' \setminus S'$ are concentrated. Thus $S''$ is concentrated by Proposition~\ref{prop::unionconc}.

Now let $U$ be a minimum set of vertices of $\bar{G}$ such that $S := S'' \setminus N(U)$ satisfies that for every $v \in \bar{G}$, $d(v,S) \leq \psi$, where $\bar{G} = L_{k-1} \setminus G$ is the complement of $G$.

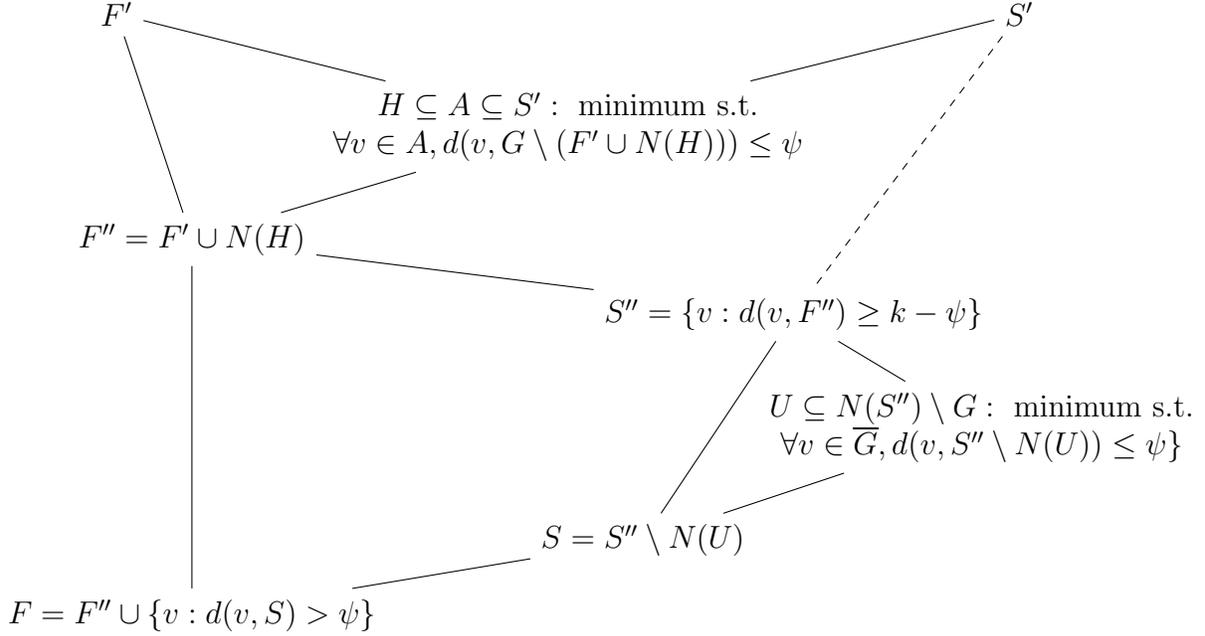
\begin{figure}[ht]
\centering
\begin{tikzpicture}
\begin{scope}[shift={(0,0)}]
	\node (F') at (-4,0) {$F'$};
	\node (S') at (8,0) {$S'$};
	\node[align=center] (H) at (2,-1.5) {$H \subseteq A \subseteq S' : \text{ minimum s.t.}$ \\ $\forall v \in A, d(v,G\setminus (F' \cup N(H))) \leq \psi$};
	\node (F'') at (-3,-3) {$F'' = F' \cup N(H)$};
	\node (S'') at (5,-4) {$S'' = \{v : d(v,F'') \geq k-\psi\}$};
	\node[align=center] (U) at (7.5,-5.5) {$U \subseteq N(S'') \setminus G : \text{ minimum s.t.}$ \\ $\forall v \in \bar{G}, d(v,S''\setminus N(U)) \leq \psi\}$};
	\node (S) at (3,-7) {$S = S'' \setminus N(U)$};
	\node (F) at (-3,-8) {$F = F'' \cup \{v : d(v,S) > \psi\}$};
	
	\draw (F') -- (H);
	\draw (S') -- (H);
	\draw (F') -- (F'');
	\draw (H) -- (F'');
	\draw (F'') -- (S'');
	\draw[dashed] (S') -- (S'');
	\draw (S'') -- (U);
	\draw (S'') -- (S);
	\draw (U) -- (S);
	\draw (S) -- (F);
	\draw (F'') -- (F);
\end{scope}
\end{tikzpicture}
\caption{The intermediary sets used to construct strong containers from weak containers. Each set can be defined by what is immediately above it via solid lines in the diagram. The dashed line represents a useful dependency when verifying that $S''$ is concentrated.}
\label{fig::strong}
\end{figure}

\begin{claim}\label{claim::U}
For every $A$, we have $|U| \leq O(t/\psi)$ and $U \subseteq N(S'')$.
\end{claim}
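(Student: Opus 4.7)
My plan is to handle the two conclusions separately, in the same spirit as the proof of Claim~\ref{claim::H}. For the inclusion $U \subseteq N(S'')$, I will use the minimality of $U$ directly: if some $u \in U$ satisfied $N(u) \cap S'' = \emptyset$, then $u$ would contribute nothing to $N(U) \cap S''$, and since $S = S'' \setminus N(U)$ depends on $N(U)$ only through $N(U) \cap S''$, the smaller set $U \setminus \{u\}$ would still satisfy the defining property of $U$. This contradicts minimality, so every $u \in U$ has a neighbor in $S''$.

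For the size bound, I will exhibit an explicit $U$ of size $O(t/\psi)$; the minimum is then no larger. Build $U$ greedily: start from $U_0 = \emptyset$ and, while there exists $v \in \bar{G}$ with $d(v, S'' \setminus N(U_i)) > \psi$, set $U_{i+1} = U_i \cup \{v\}$. Each iteration strictly shrinks $S'' \setminus N(U_i)$ by more than $\psi$ vertices.

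The crux of the argument is to control the total amount by which we can shrink. Since $G = N(A)$, any $v \in \bar{G} = L_{k-1} \setminus N(A)$ has $N(v) \cap A = \emptyset$, so in fact $N(v) \cap S'' \subseteq S'' \setminus A$ for every vertex added to $U$. Therefore every vertex removed during the greedy process lies in $S'' \setminus A$, a set of size $O(t)$ by~\eqref{eq::S''-A}. Hence the process terminates after at most $O(t/\psi)$ steps, giving $|U| \leq O(t/\psi)$.

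I do not expect any serious obstacle here: the reasoning is a direct mirror image of the bound on $|H|$, with the roles of the two sides of the bipartition swapped --- the disjointness $N(\bar G) \cap A = \emptyset$ replaces the analogous fact that $N(H) \subseteq G$, and $|S'' \setminus A| = O(t)$ plays the role that $|G \setminus F'| = O(t)$ played in Claim~\ref{claim::H}. The only thing to be a little careful about is that the greedy construction yields \emph{some} set with the required property (and size), and then passing to the minimum $U$ only makes things smaller, so the stated size bound follows.
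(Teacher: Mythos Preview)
Your proposal is correct and follows essentially the same approach as the paper: the greedy construction bounding $|U|$ via $|S''\setminus A| = O(t)$ (using that $N(\bar G)\cap A=\emptyset$) is exactly the argument given there, and your minimality argument for $U\subseteq N(S'')$ is the paper's one-line justification spelled out in detail.
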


\begin{proof}
The proof is similar to the proof of Claim~\ref{claim::H}. We may construct such $U$ greedily, by considering vertices $v \in \bar{G}$ one at a time and placing them in $U$ if $d(v,S'' \setminus N(U)) > \psi$. Upon the addition of $v$ to $U$, we reduce the size of $S'' \setminus N(U)$ by at least $\psi$. Since $U \subseteq \bar{G}$, we have that $S'' \setminus N(U)$ always contains $A$, and hence $|U| \leq \frac{|S'' \setminus A|}{\psi} = O(t/\psi)$ by~\eqref{eq::S''-A}. That $U \subseteq N(S'')$ follows from the minimality of $U$: for each vertex $v \in U$, $N(v)$ should intersect $S''$.
\end{proof}

We have by Claim~\ref{claim::U} and $t \geq a/2k$ that
\[ \cost(U|S'') \leq \log_2\binom{|N(S'')|}{\leq O(t/\psi)} \leq O\left( \frac{t}{\psi} \log\frac{tk^2}{t/\psi} \right) \leq O\left( t \frac{\log k}{\psi} \right) \leq o(t) .\]
Since $U$ is disjoint from $G$, we have $A \subseteq S$ and $S'' \cap N(U) \subseteq S'' \setminus A$. By~\eqref{eq::S''-A}, we have
\[ \frac{t^2}{|S'' \cap N(U)|} \geq \Omega(t) \geq \max\left\{\omega(\cost(U|S'') + \cost(S'')), \omega(k)\right\} ,\]
so $S'' \cap N(U)$ is concentrated by Proposition~\ref{prop::Chern}. Hence by Proposition~\ref{prop::unionconc}, $S$ is concentrated.

Finally, define $F = F'' \cup \{v : d(v,S)>\psi\}$, so $\cost(S,F) = o(t)$. By the construction of $S$, we have $F \subseteq G$, and hence $|F \setminus F''| = O(t)$. By Proposition~\ref{prop::Chern}, $|F \setminus F''|$ is concentrated, so by Proposition~\ref{prop::unionconc}, $F$ is concentrated.

All that remains to be checked is that $|S| \leq |F| + o(t)$. By construction of $F$, every $v \in \bar{F}$ satisfies $d(v,\bar{S}) \geq k-\psi$. For every $v \in S$, we have $v \in S''$, so $d(v,F) \geq d(v,F'') \geq k-\psi$. Recall that $|E(G,\bar{A})| = |G|k - |A|k = tk$. Each vertex in $G \setminus F$ contributes at least $k-\psi$ edges to $E(G,\bar{A})$, so $|G \setminus F| \leq \frac{tk}{k-\psi}$, and each vertex in $S \setminus A$ contributes at least $k-\psi$ edges to $E(\bar{A},G)$, so $|S \setminus A| \leq \frac{tk}{k-\psi}$. Then we have
\[ k|F| + \frac{tk}{k-\psi} \psi \geq k|F| + \psi |G \setminus F| \geq e(S,G) \]
\[ \geq k|A| + (k-\psi)|S\setminus A| = k|S| - \psi |S\setminus A| \geq k|S| - \frac{tk}{k-\psi}\psi .\]
Rearranging, we obtain
\[ |S| \leq |F| + \frac{2t\psi}{k-\psi} \leq |F| + o(t) .\qedhere\]
\end{proof}

\paragraph{Remark.} It is in the final step of the proof of Lemma~\ref{lem::strong} that we use crucially that $M$ is regular. The proof would still go through if the bipartite graph were bi-regular, meaning the degrees on the same side are all equal, and the degrees of vertices in $A$ were larger than the degrees of vertices in $N(A)$.

\section{Using strong containers}\label{sec::use-containers}

\begin{proof}[Proof of Theorem~\ref{thm::2-layers}]
Let $p = 1/2+\ep$, where $\ep>0$ is a constant, and let $X = V(M)_p$. Lemma~\ref{lem::small a} proves Theorem~\ref{thm::2-layers} for $A$ with $a := |A| \leq \log^8 k$, so we assume that $a \geq \log^8 k$. For $a \leq k^3$, we use Theorem~\ref{thm::KK} with $a = \binom{z}{k}$ and $z = k+\alpha$ for some $\alpha \in (0,3]$ to see that
\[ t := |N(A)| - |A| \geq \binom{z}{k-1} - \binom{z}{k} = \frac{-\alpha+k-1}{\alpha+1} a \geq ak/4 \geq k \log^7 k .\]
For $a \geq k^3$, the assumption that $t \geq a/2k$ yields the same conclusion. Now fix $t \geq k \log^7 k$.

We now abandon the asymptotic notation to be careful with the constants. We have by Lemma~\ref{lem::strong} that concentrated strong containers exist: there exists $\mathcal{W}_t$ of size at most $2^{\ep t}$ such that with probability at least $1-\exp(-2k/\ep)$, for all $A \in \A_t$ there exists $(S,F) \in \mathcal{W}_t$ such that $A \subseteq S$, $F \subseteq N(A)$, $|S| \leq |F| + \frac{\ep^2}{16} t$, $||S \cap X|-p|S|| \leq \frac{\ep^2}{16} t$ and $||F \cap X|-p|F|| \leq \frac{\ep^2}{16} t$. Consider a given $A \in \A_t$ and the corresponding $(S,F) \in \mathcal{W}_t$, and suppose $X$ is such that this concentration of $S$ and $F$ hold.

If $|S| \leq |F| - \frac{\ep^2}{2} t$, then we are already done, since (recalling that $G = N(A)$)
\begin{equation}\label{eq::ineqchain}
|G \cap X| - |A \cap X| \geq |F \cap X| - |S \cap X| \geq p|F| - p|S| - \frac{\ep^2}{8} t \geq \frac{\ep^2}{8} t .
\end{equation}
Hence we may suppose $|S| \geq |F| - \frac{\ep^2}{2} t$. Our goal is to show that
\begin{equation}\label{eq::bad}
|(S \setminus A) \cap X| + |(G \setminus F) \cap X| \geq \frac{\ep^2}{4} t
\end{equation}
with probability at least $1-\exp(-2k/\ep)$ for all $A \in \A_t$, which improves~\eqref{eq::ineqchain} to
\[ |G \cap X| - |A \cap X| \geq \frac{\ep^2}{4}t + p(|F|-|S|) - \frac{\ep^2}{8} t \geq \frac{\ep^2}{16} t ,\]
using that $|S| \leq |F| + \frac{\ep^2}{16} t$.

First we determine the cost of $A$. The following idea is made explicit by Park~\cite{Par}, but appears in an earlier work of Hamm and Kahn~\cite{HK2}. For every strong container $(S,F) \in \mathcal{W}_t$ and every $a$ and $g$, fix $A^* \in \A_t$ of size $a$ and $G^* = N(A^*)$ of size $g$ which have $(S,F)$ as a strong container. Note that $A^*$ and $G^*$ are determined by $(S,F)$.

Given $A^*$, to determine $A$ it suffices to determine $A \setminus A^*$ and $A^* \setminus A$. The cost of $A \setminus A^*$ is at most $|S \setminus A^*|$, since $A \setminus A^* \subseteq S \setminus A^*$. Because $A$ is closed, $A^* \setminus A$ consists of all the neighbors of $G^* \setminus G$ in $A^*$, hence $A^* \setminus A$ is determined by $G^* \setminus G$. Thus the cost of $A^* \setminus A$ is at most $|G^* \setminus F|$, since $G^* \setminus G \subseteq G^* \setminus F$. Let $t' = |S \setminus A| + |G \setminus F| = t + (|S|-|F|)$. Then the cost of $A$ is at most $t' + \cost(S,F) \leq t' + \ep t$.

Now we bound the probability that~\eqref{eq::bad} occurs. Note that $|((S \setminus A) \cup (G \setminus F)) \cap X|$ is binomially distributed, so the probability that it is less than $\frac{\ep^2}{4} t$ is
\[ \sum_{i=0}^{\floor{\ep^2 t/4}} \binom{t'}{i} p^i (1-p)^{t'-i} = 2^{-t'} \sum_{i=0}^{\floor{\ep^2 t/4}} \binom{t'}{i} (1+2\ep)^i (1-2\ep)^{t'-i} \]
\[ \leq 2^{-t'} \frac{\ep^2 t}{4} \exp\left( \frac{\ep^2 t}{4} \log\frac{4et'}{\ep^2 t} + \frac{\ep^2 t}{4} \log(1+2\ep) + \left(t'-\frac{\ep^2 t}{4}\right)\log(1-2\ep) \right) \]
\[ \leq 2^{-t'} \frac{\ep^2 t}{4} \exp\left( \frac{\ep^2 t}{2} + \frac{\ep t}{2} + \ep^3 t - 2\ep t' \right) \leq 2^{-t'} \frac{\ep^2 t}{4} e^{-\ep t} .\]
Comparing to $\cost(A) \leq t' + \ep t$, we may take a union bound over all $A$ and choices of $a=|A|$ and $g=|G|$ to get that $|(S \setminus A) \cap X| + |(G \setminus F) \cap X| \geq \frac{\ep^2}{4} t$ for all $A \in \A_t$ with probability at least $1-\exp(\Omega(t)) \geq 1 - \exp(-2k/\ep)$. Finally, we take a union bound over all $k \log^7 k \leq t \leq \binom{2k-1}{k}$ to finish the proof.
\end{proof}


\begin{thebibliography}{10}

\bibitem{BGL}
J.~Balogh, R.~I.~Garcia, and L.~Li.
\newblock Independent sets in the middle two layers of Boolean lattice.
\newblock {\em Journal of Combinatorial Theory, Series A}, 178:105341, 2021.

\bibitem{BKL}
J.~Balogh, R.~A.~Krueger, and H.~Luo.
\newblock Sharp threshold for the {E}rd{\H{o}}s-{K}o-{R}ado theorem.
\newblock {\em Random Structures \& Algorithms}, 62(1):3-28, 2022.


\bibitem{BMS}
J.~Balogh, R.~Morris, and W.~Samotij.
\newblock Independent sets in hypergraphs.
\newblock {\em Journal of the American Mathematical Society}, 28(3):669--709,
  2015.

\bibitem{BMT}
J.~Balogh, R.~Mycroft, and A.~Treglown.
\newblock A random version of {S}perner's theorem.
\newblock {\em Journal of Combinatorial Theory, Series A}, 128:104--110, 2014.

\bibitem{CNM}
M.~Collares~Neto and R.~Morris.
\newblock Maximum-size antichains in random set-systems.
\newblock {\em Random Structures \& Algorithms}, 49(2):308--321, 2016.

\bibitem{CG}
D.~Conlon and W.~T. Gowers.
\newblock Combinatorial theorems in sparse random sets.
\newblock {\em Annals of Mathematics (2)}, 184(2):367--454, 2016.

\bibitem{Dede}
R.~Dedekind.
\newblock {\"U}ber Zerlegungen von Zahlen Durch Ihre Gr{\"o}ssten Gemeinsamen Theiler.
\newblock In {\em Fest-Schrift der Herzoglichen Technischen Hochschule Carolo-Wilhelmina}, 1--40, Vieweg+Teubner Verlag, Wiesbaden, 1897.

\bibitem{Erdos}
P.~Erd\H{o}s.
\newblock On a lemma of Littlewood and Offord.
\newblock {\em Bulletin of the American Mathematical Society}, 51:898--902, 1945.

\bibitem{Galvin}
D.~Galvin.
\newblock Independent sets in the discrete hypercube.
\newblock {\em arXiv:1901.01991}, 2019.

\bibitem{GK}
D.~Galvin and J.~Kahn.
\newblock On Phase Transition in the Hard-Core Model on $\mathbb{Z}^d$.
\newblock {\em Combinatorics, Probability and Computing}, 13(2):137--164, 2004.

\bibitem{HK2}
A.~Hamm and J.~Kahn.
\newblock On {E}rd{\H{o}}s-{K}o-{R}ado for random hypergraphs {II}.
\newblock {\em Combinatorics, Probability and Computing}, 28(1):61--80, 2019.

\bibitem{JK}
M.~Jenssen and P.~Keevash.
\newblock Homomorphisms from the torus.
\newblock {\em arXiv:2009.08315}, 2020.

\bibitem{JP}
M.~Jenssen and W.~Perkins.
\newblock Independent sets in the hypercube revisited.
\newblock {\em Journal of the London Mathematical Society}, 102:645--669, 2020.

\bibitem{Katona}
Gy.~O.~H. Katona.
\newblock A theorem of finite sets.
\newblock {\em Theory of Graphs} (Proc. Colloq., Tihany, 1966), 187--207, Academic Press, New York, 1968.

\bibitem{KK}
Y.~Kohayakawa and B.~Kreuter.
\newblock The width of random subsets of {B}oolean lattices.
\newblock {\em Journal of Combinatorial Theory, Series A}, 100(2):376--386,
  2002.

\bibitem{KKO}
Y.~Kohayakawa, B.~Kreuter, and D.~Osthus.
\newblock The length of random subsets of {B}oolean lattices.
\newblock {\em Random Structures \& Algorithms}, 16(2):177--194, 2000.

\bibitem{Korsh}
A.~D.~Korshunov.
\newblock On the number of monotone Boolean functions.
\newblock {\em Problemy Kibernet.}, 38:5--108, 1981. (Russian)

\bibitem{KS}
A.~D.~Korshunov and A.~A.~Sapozhenko.
\newblock The number of binary codes with distance 2.
\newblock {\em Problemy Kibernet.}, 40:111--130, 1983. (Russian)

\bibitem{Kruskal}
J.~B. Kruskal.
\newblock The number of simplices in a complex.
\newblock {\em Mathematical optimization techniques}, 10:251--278, Univ.~of California Press, Berkeley, California, 1963.

\bibitem{Lovasz}
L.~Lov{\'a}sz, \emph{Combinatorial problems and exercises}, North-Holland,
  Amsterdam, 1993.

\bibitem{Osthus}
D.~Osthus.
\newblock Maximum antichains in random subsets of a finite set.
\newblock {\em Journal of Combinatorial Theory, Series A}, 90(2):336--346,
  2000.

\bibitem{Par}
J.~Park.
\newblock Note on the number of balanced independent sets in the {H}amming cube.
\newblock {\em Electronic Journal of Combinatorics}, 29(2):P2.34, 2022.

\bibitem{Renyi}
A.~R{\'e}nyi.
\newblock On random subsets of a finite set.
\newblock {\em Mathematica (Cluj)}, 3(26):355--362, 1961.

\bibitem{RandomExtremal}
V.~R{\"o}dl and M.~Schacht.
\newblock Extremal results in random graphs.
\newblock In {\em Erd{\H{o}}s centennial}, 535--583, Springer, Berlin, Heidelberg, 2013.

\bibitem{Sap}
A.~A. {Sapozhenko}.
\newblock {On the number of connected subsets with given cardinality of
  boundaries in bipartite graphs}.
\newblock {\em {Metody Diskretn. Anal.}}, 45:42--70, 1987. (Russian)

\bibitem{ST}
D.~Saxton and A.~Thomason.
\newblock Hypergraph containers.
\newblock {\em Inventiones Mathematicae}, 201(3):925--992, 2015.

\bibitem{Scha}
Mathias Schacht.
\newblock Extremal results for random discrete structures.
\newblock {\em Annals of Mathematics (2)}, 184(2):333--365, 2016.

\bibitem{Sper}
E.~Sperner.
\newblock {E}in {S}atz {\"u}ber {U}ntermengen einer endlichen {M}enge.
\newblock {\em Mathematische Zeitschrift}, 27(1):544--548, 1928.

\end{thebibliography}
\end{document}